\newcommand{\Lim}[1]{\raisebox{0.6ex}{\scalebox{0.9}{$\displaystyle \lim_{#1}\;$}}}
\newcommand{\LimInf}[1]{\raisebox{0.6ex}{\scalebox{0.9}{$\displaystyle \liminf_{#1}\;$}}}
\newcommand{\vertiii}[1]{{\left\vert\kern-0.25ex\left\vert\kern-0.25ex\left\vert #1
    \right\vert\kern-0.25ex\right\vert\kern-0.25ex\right\vert}}
\def\latex/{\protect\LaTeX{}}
\def\tex/{\TeX}
\def\ams/{\protect\pAmS}
\def\pAmS{{\the\textfont2
        A\kern-.1667em\lower.5ex\hbox{M}\kern-.125emS}}
\def\amslatex/{\ams/-\latex/}
\newfont{\tricyr}{wncyr10 at 12pt}
\newfont{\tricyi}{wncyi10 at 12pt}
\newfont{\tricyb}{wncyb10 at 12pt}
\newfont{\Tricyr}{wncyr10 at 13.6pt}
\newfont{\Tricyi}{wncyi10 at 13.6pt}
\newfont{\Tricyb}{wncyb10 at 13.6pt}
\newfont{\tricmr}{cmr10 at 13.6pt}
\newfont{\tricmi}{cmti10 at 13.6pt}
\newfont{\tricmb}{cmb10 at 13.6pt}
\newtheorem{thm}{Theorem}[section]
\newtheorem{lem}[thm]{Lemma}
\theoremstyle{definition}
\theoremstyle{remark}
\numberwithin{equation}{section}
\begin{document}

\title{\bf Existence and stability of standing waves for nonlinear Schr\"{o}dinger systems involving the fractional Laplacian}
\author{SANTOSH BHATTARAI}

\date{}

\maketitle

\begin{abstract}
In the present paper we consider the coupled system of nonlinear Schr\"{o}dinger equations with the fractional Laplacian
\[
\left\{
\begin{aligned}
  & \left(-\Delta\right)^\alpha u_1 = \lambda_1u_1+f_1(u_1)+\partial_1F(u_1,u_2)\ \textrm{in}\ \mathbb{R}^N,\\
   & \left(-\Delta\right)^\alpha u_2 = \lambda_2u_2+f_2(u_2)+\partial_2F(u_1,u_2)\ \textrm{in}\ \mathbb{R}^N,\\
\end{aligned}
\right.
\]
where $u_1, u_2:\mathbb{R}^N\to \mathbb{C}, N\geq 2,$ and $0<\alpha<1.$
By studying an appropriate family of constrained minimization problems, we obtain the existence of solutions
in $H^\alpha(\mathbb{R}^N) \times H^\alpha(\mathbb{R}^N)$ satisfying
\[
\int_{\mathbb{R}^N}|u_1|^2\ dx = \sigma_1\ \ \textrm{and}\ \ \int_{\mathbb{R}^N}|u_2|^2\ dx=\sigma_2
\]
for given $\sigma_j>0.$ The numbers $\lambda_1$ and $\lambda_2$ in the system appear
as Lagrange multiplier. The method
is based on the concentration compactness arguments,
but introduces a new way to verify some of the
properties of the
variational problem that are required in order for
the concentration compactness
method to work.
We consider the case when $f_j(s)=\mu_j|s|^{p_j-2}s$ and $F(s,t)=\beta |s|^{r_1}|t|^{r_2}$ with $\mu_j>0, \beta>0,$ and the values $r_i>1, 2<p_j, r_1+r_2<2+\frac{4\alpha}{N}.$
The method also enables us to prove
the stability result of standing wave solutions associated with the set of global minimizers.
\end{abstract}

\section{Introduction}

The present study is concerned with the coupled nonlinear fractional Schr\"{o}dinger system of the form
\begin{equation}\label{Fnls}
\left\{
\begin{aligned}
  & i \partial_t \Psi_1+\left(-\Delta\right)^\alpha \Psi_1 = f_1\left(\Psi_1\right)+\partial_1F(\Psi_1,\Psi_2),\ x\in \mathbb{R}^N,\ t>0,\\
   & i\partial_t \Psi_2+\left(-\Delta\right)^\alpha\Psi_2 = f_2\left(\Psi_2 \right)+\partial_2F(\Psi_1,\Psi_2),\ x\in \mathbb{R}^N,\ t>0,\\
\end{aligned}
\right.
\end{equation}
where $\Psi_1,\Psi_2:\mathbb{R}^+\times \mathbb{R}^N \to \mathbb{C},$ $N\geq 2, \alpha\in(0,1)$ is the fractional parameter, the function $f_j\in C\left(\mathbb{R}^N\times \mathbb{C}\right)$ satisfies $f_j(x, ze^{i\theta})=e^{i\theta}f_j(z),$ and $F\in C\left(\mathbb{R}^N\times \mathbb{C}^2\right)$ satisfies
\[
\partial_jF(x,z_1e^{i\theta_1}, z_2e^{i\theta_2})=e^{i\theta_j}\partial_jF(z_1, z_2),\ \ j=1,2.
\]
For any parameter $\alpha\in (0,1),$ the fractional Laplacian $\left(-\Delta \right)^\alpha$
is defined via Fourier transform as
\begin{equation}\label{FLdef}
\widehat{(-\Delta)^\alpha u}(\xi)=|\xi|^{2 \alpha}\widehat{u}(\xi),\  u\in \mathcal{S}(\mathbb{R}^N),
\end{equation}
where $\mathcal{S}(\mathbb{R}^N)$ denotes the Schwartz space of rapidly decaying $C^\infty(\mathbb{R}^N)$ functions.

\smallskip

Nonlinear fractional Schr\"{o}dinger equation was
introduced by N. Laskin in a series of papers \cite{Las1, Las2, Las3} by generalizing
the Feynman path integral over Brownian-like paths to L\'{e}vy-like quantum paths$.$ In other words,
if the Feynman path integral over Brownian trajectories
allows one to reproduce the well known NLS equation, then the path
integral over L\'{e}vy trajectories leads one to a space-fractional
Schr\"{o}dinger equation (see \cite{Las3}).
The models involving the fractional Laplacian arise
in the description of a wide variety of phenomena
in the applied sciences such as finance, plasma physics, obstacle problems, semipermeable membrane, anomalous diffusion, to name a few.
For instance, the reader may consult \cite{Kir}
for a rigorous derivation of fractional NLS type equations
starting from a family of models
for charge transport in
biopolymers like the deoxyribonucleic acid (DNA) and \cite{Met2, Met} for the
derivation of many fractional differential equations asymptotically from L\'{e}vy random walk models.
Research in the fractional Schr\"{o}dinger equations have recently begun to receive more attention and
the literature for this research area is still expanding and rather young in the mathematics realm.

\smallskip

In this paper we study existence and stability of standing wave solutions of the system \eqref{Fnls}.
A standing wave for the system \eqref{Fnls} is a solution of the
form $\Psi_j(t,x)=e^{i\lambda_jt}u_j(x)$ for some numbers $\lambda_1,\lambda_2\in\mathbb{R}.$
Plugging the standing wave ansatz into \eqref{Fnls}, one sees that the functions $u_1, u_2:\mathbb{R}^N\to \mathbb{C}$
satisfy the following coupled system of time independent equations
\begin{equation}\label{ODE}
\left\{
\begin{aligned}
  & \left(-\Delta\right)^\alpha u_1 = \lambda_1 u_1+ f_1(u_1)+\partial_1F(u_1,u_2)\ \ \textrm{in}\ \mathbb{R}^N,\\
   & \left(-\Delta\right)^\alpha u_2 = \lambda_1 u_2+ f_1(u_2)+\partial_2F(u_1,u_2) \ \ \textrm{in}\ \mathbb{R}^N.
\end{aligned}
\right.
\end{equation}
The study of standing wave solutions is of particular interest in physics.
The question of existence of solutions has been well studied in the literature
for the standard nonlinear Schr\"{o}dinger type equations and their coupled versions.
The approach to stability theory taken here will be the variational approach.
The orbital stability of standing waves for the standard nonlinear
Schr\"{o}dinger equations was proved by Cazenave and Lions \cite{CLi} by using
the concentration compactness principle \cite{Lio, Lio2}
to characterize these special waves as minimizers of certain
variational problems. There are also many results concerning the stability of standing waves for coupled systems of
nonlinear Schr\"{o}dinger equations (for example, see \cite{ABnls, SBdcds, JeanJ, NWang}).
Concerning the fractional Schr\"{o}dinger equations, Guo and Huang \cite{Guo} have recently proved
the stability of standing waves by using Cazenave and Lions's concentration compactness arguments.
In the present paper we prove results for
coupled fractional Schr\"{o}dinger systems which are in the same spirit of the results of the above cited papers.
To our knowledge, none has addressed the questions of existence and stability of solutions
with prescribed $L^2$-norms 
for coupled systems of fractional Schr\"{o}dinger equations.

\smallskip

In the study of nonlinear systems such as \eqref{ODE}, it is important to choose proper function space.
In this work, we consider \eqref{ODE} in the energy space
$
H^\alpha(\mathbb{R}^N)\times H^\alpha(\mathbb{R}^N),
$
with the norm denoted by $\|\cdot\|_{\alpha}.$
Here and in what follows $H^\alpha(\mathbb{R}^N)$ denotes the fractional order Sobolev space
\[
H^\alpha(\mathbb{R}^N)=\left\{u\in L^2(\mathbb{R}^N): \xi\mapsto (1+|\xi|^2)^{\alpha/2}\widehat{u}(\xi) \in L^2(\mathbb{R}^N, d\xi) \right\}
\]
with the norm $\|\cdot\|_{H^\alpha(\mathbb{R}^N)}$ given by
$
\|u\|_{H^\alpha(\mathbb{R}^N)}=\|(1+|\xi|^2)^{\alpha/2}\widehat{u}(\xi)\|_{L^2(\mathbb{R}^N)}.$
We shall denote by $\|\cdot\|_{L^p(\mathbb{R}^N)}$ the norm in the space $L^p(\mathbb{R}^N).$
The function spaces appearing in this paper will, unless otherwise stated, all have complex-valued functions.

\smallskip

Following closely the strategies of
\cite{AAkdv, ABnls, [AB11], SBdcds}, we first address the existence question
of standing wave profiles $(u_1,u_2)$ in $H^\alpha(\mathbb{R}^N)\times H^\alpha(\mathbb{R}^N)$ satisfying
\begin{equation}\label{constraints}
\int_{\mathbb{R}^N}|u_1|^2\ dx=\tau_1 \ \ \textrm{and}\ \ \int_{\mathbb{R}^N}|u_2|^2\ dx=\tau_2
\end{equation}
for given $\tau_1>0$ and $\tau_2>0.$ In literature these special solutions are also called $L^2$ normalized (or simply normalized) solutions.
To avoid technicalities, we only consider the case when $f_1(s)=\mu_1|s|^{p_1-2}s, f_2(s)=\mu_2|s|^{p_2-2}s,$ and $F(s,t)=\beta |s|^{r_1}|t|^{r_2}$ and assume throughout that the following assumptions hold:
\begin{equation}\label{assump}
N\geq 2,\ 0<\alpha<1, \mu_j>0, \beta>0, r_i>1,\ \textrm{and}\ 2<p_1,p_2, r_1+r_2<2+\frac{4\alpha}{N}.
\end{equation}

 A standard way often used to describe standing wave solutions
is as critical points of constrained variational problems, in which the functional being
minimized and the constraint functionals are conserved quantities. For purposes of
investigating the stability of standing waves, however, one needs to characterize them not just as critical points,
but as absolute minimizers of constrained variational problems.
In this paper, we consider, for any $\tau=(\tau_1,\tau_2)\in \mathbb{R}^+\times \mathbb{R}^+,$
the problem of finding minimizers of the energy
\begin{equation*}
\begin{aligned}
E(u_1,u_2)& =\frac{1}{2}\int_{\mathbb{R}^N}\left(|\left(-\Delta \right)^{\alpha/2}u_1|^2 + |\left(-\Delta \right)^{\alpha/2}u_2 |^2 \right)\ dx \\ & -\int_{\mathbb{R}^N}\left(\frac{\mu_1}{p_1}|u_1|^{p_1}+\frac{\mu_2}{p_2}|u_2|^{p_2}+F(u_1,u_2)\right)\ dx
\end{aligned}
\end{equation*}
over the set $\mathcal{S}_\tau=S_{\tau_1}\times S_{\tau_2},$
where $S_r=\{u\in H^\alpha(\mathbb{R}^N):\|u\|_{L^2(\mathbb{R}^N)}=\sqrt{r}~\}$ for any $r>0.$
If there exists a minimizer
$(u_1,u_2)$ for the
problem $(E, \mathcal{S}_\tau),$ then it is a solution of \eqref{ODE}
and the function $(\Psi_1, \Psi_2)$ defined
by $\Psi_j(t,x)=e^{i\lambda_j t}u_j(x)$ is a
standing wave solution of \eqref{Fnls}.
The numbers $\lambda_1$ and $\lambda_2$ are the Lagrange multipliers
associated to the stationary point $(u_1,u_2)$ on $\mathcal{S}_\tau.$
The stability of the set of minimizers follows by a standard principle
since both the energy $E(u_1,u_2)$ and the constraint
functionals $\int_{\mathbb{R}^N}|u_i|^2\ dx$ are conserved along the flow of \eqref{Fnls}.
To prove the precompactness of an energy-minimizing sequence via
the method of concentration compactness,
we establish certain strict inequalities of the function involving
the infimum of the problem $(E, \mathcal{S}_\tau)$ as a function of two
parameters $\tau_1$ and $\tau_2.$

\smallskip

Our first main theorem addresses the existence of minimizers of $(E, \mathcal{S}_\tau).$

\begin{thm}
Suppose that the assumptions \eqref{assump} hold. Then

\smallskip

(i) for any given $\tau \in \mathbb{R}^{+}\times\mathbb{R}^{+},$ there exists a nonempty
subset $V(\tau)$ of $H^\alpha(\mathbb{R}^N)\times H^\alpha(\mathbb{R}^N)$ consisting of
minimizers of the problem $(E, \mathcal{S}_\tau).$

\smallskip

(ii) any minimizer $(u_1,u_2)\in V(\tau)$
satisfies \eqref{ODE}, for some $\lambda \in \mathbb{R}^{+}\times\mathbb{R}^{+},$ and the function
$(\Psi_1, \Psi_2)$ defined by $\Psi_j(t,x)=e^{i\lambda_j t}u_j(x)$
is a standing wave solution of \eqref{Fnls}
satisfying
\[
\|\Psi_1\|_{L^2(\mathbb{R}^N) }=\sqrt{\tau_1}\ \ \textrm{and}\ \ \|\Psi_2\|_{L^2(\mathbb{R}^N)}=\sqrt{\tau_2}.
\]

\smallskip

(iii) if $\{(u_1^n,u_2^n) \}_{n\geq 1}$ is an energy-minimizing sequence for the problem $(E,\mathcal{S}_\tau),$
then there exists a sequence of points $\{y_k\}\subset \mathbb{R}^N$ and a subsequence $\{(u_1^{n_k},u_2^{n_k}) \}_{k\geq 1}$
such that $(u_1^{n_k}(\cdot+y_k),u_2^{n_k}(\cdot+y_k))\to (u_1,u_2)$ strongly
in $H^\alpha(\mathbb{R}^N)\times H^\alpha(\mathbb{R}^N),$ as $k\to \infty,$ where $(u_1,u_2)$ is some minimizer for $(E,\mathcal{S}_\tau).$
Moreover, $(u_1^n,u_2^n)\to V(\tau)$ in the following sense
\[
\lim_{n\to \infty}\inf_{(u_1,u_2)\in V(\tau)}\|(u_1^n,u_2^n)-(u_1, u_2) \|_{\alpha}=0.
\]

\smallskip

(iv) the family of sets $V(\tau)$ is mutually disjoint in the following sense
\[
\forall \tau, \sigma \in \mathbb{R}^{+}\times\mathbb{R}^{+}, \tau \neq \sigma \Rightarrow V(\tau)\cap V(\sigma)=\emptyset.
\]
\end{thm}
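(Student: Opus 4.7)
The plan is to produce a minimizer of $(E,\mathcal{S}_\tau)$ via Lions' concentration--compactness principle applied to an arbitrary minimizing sequence; parts (ii)--(iv) then follow from the Lagrange multiplier theorem and the very definition of $V(\tau)$.

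\smallskip

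First I would dispatch the preliminaries. The fractional Gagliardo--Nirenberg inequality
\[
\|u\|_{L^q(\mathbb{R}^N)}^{q}\le C\,\|(-\Delta)^{\alpha/2}u\|_{L^2}^{\theta(q)}\,\|u\|_{L^2}^{q-\theta(q)},\qquad \theta(q)=\frac{N(q-2)}{2\alpha},
\]
combined with H\"older for the mixed term $\int|u_1|^{r_1}|u_2|^{r_2}\,dx$, shows under the subcriticality $p_1,p_2,r_1+r_2<2+\tfrac{4\alpha}{N}$ that every nonlinear piece of $E$ is bounded on $\mathcal{S}_\tau$ by $C\prod_j\|(-\Delta)^{\alpha/2}u_j\|_{L^2}^{a_j}$ with $\sum_j a_j<2$. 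Hence $E$ is coercive on $\mathcal{S}_\tau$, the infimum $I_\tau:=\inf_{\mathcal{S}_\tau}E$ is finite, and every minimizing sequence is bounded in $H^\alpha\times H^\alpha$. That $I_\tau<0$ follows by evaluating $E$ on the mass-preserving rescaling $u_j^\mu(x)=\mu^{N/2}u_j(\mu x)$ of some fixed element of $\mathcal{S}_\tau$ and letting $\mu\to 0^+$: the kinetic part scales like $\mu^{2\alpha}$ while the leading nonlinear contribution scales like $\mu^{\sigma}$ with $\sigma<2\alpha$.

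\smallskip

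The core of the proof is a concentration--compactness dichotomy applied to the mass densities $\rho_n:=|u_1^n|^2+|u_2^n|^2$, of total mass $\tau_1+\tau_2$ along any minimizing sequence. I would rule out vanishing via the fractional analogue of Lions' lemma: if $\sup_{y\in\mathbb{R}^N}\int_{B_R(y)}\rho_n\,dx\to 0$ for every $R>0$, then $u_j^n\to 0$ in $L^q$ for every $2<q<2N/(N-2\alpha)$, contradicting $\liminf E(u_1^n,u_2^n)=I_\tau<0$. The main obstacle is the exclusion of dichotomy, which requires the strict subadditivity
\[
I_\tau<I_{\tau^{(1)}}+I_{\tau^{(2)}}\qquad\text{whenever }\tau=\tau^{(1)}+\tau^{(2)},\ \tau^{(k)}\ne 0,
\]
with the decomposition understood componentwise. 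The key ingredient is the scaling relation $I_{\theta\tau}<\theta I_\tau$ for every $\theta>1$: evaluating $E$ at $(\sqrt{\theta}\,u_1,\sqrt{\theta}\,u_2)$ for a near-optimizer of $I_\tau$, the kinetic term picks up the factor $\theta$ while each negative nonlinear term picks up $\theta^{p_j/2}>\theta$ or $\theta^{(r_1+r_2)/2}>\theta$, so the net effect is strictly smaller than $\theta I_\tau$ since $I_\tau<0$ and the nonlinear part is nontrivial. A small additional argument covers the case where some $\tau_j^{(k)}$ vanishes, in which the cross term disappears from one piece: the strict inequality is then recovered by combining a one-variable subadditivity with a small mass transfer between components whose energy cost is controlled by Gagliardo--Nirenberg.

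\smallskip

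Once vanishing and dichotomy are excluded, concentration provides translations $y_k\in\mathbb{R}^N$ along which $\rho_n$ is tight. Together with the local compactness $H^\alpha(B_R)\hookrightarrow L^q(B_R)$ for $q<2N/(N-2\alpha)$ and the weak lower semicontinuity of the fractional kinetic energy, tightness forces $(u_1^{n_k}(\cdot+y_k),u_2^{n_k}(\cdot+y_k))$ to converge strongly in $H^\alpha\times H^\alpha$ to a minimizer $(u_1,u_2)\in V(\tau)$; this establishes (i) and the first assertion of (iii), and the second assertion of (iii) then follows by a standard subsequence-extraction argument applied to an arbitrary minimizing sequence. For (ii), the Lagrange multiplier theorem applied to $E$ on $\mathcal{S}_\tau$ produces $\lambda_1,\lambda_2$ for which $(u_1,u_2)$ solves \eqref{ODE}. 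Finally, (iv) is immediate: any element of $V(\tau)\cap V(\sigma)$ would satisfy $\|u_j\|_{L^2}^2=\tau_j=\sigma_j$ for $j=1,2$, forcing $\tau=\sigma$.
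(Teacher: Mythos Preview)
Your overall plan is sound and mirrors the paper's concentration--compactness strategy, but there is a genuine gap in your strict subadditivity argument. The scaling relation $I_{\theta\tau}<\theta I_\tau$ for $\theta>1$, obtained by testing with $(\sqrt{\theta}\,u_1,\sqrt{\theta}\,u_2)$, only yields the subadditivity inequality when the split is \emph{proportional}, i.e.\ when $\tau^{(1)}=\theta^{-1}\tau$ and $\tau^{(2)}=(1-\theta^{-1})\tau$ for some $\theta>1$. In the two-constraint problem, however, dichotomy may produce a splitting $\tau=\tau^{(1)}+\tau^{(2)}$ in which $\tau_1^{(1)}/\tau_1\neq\tau_2^{(1)}/\tau_2$; there is then no single scalar $\theta$ to invoke, and your argument as written does not apply. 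Your ``small additional argument'' only covers the boundary case where some component vanishes, not the generic non-proportional interior split.

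This difficulty is precisely the point of the paper's Lemmas~\ref{SUBApos}--\ref{SUBApos3}, which replace the one-parameter scaling trick by a case analysis: from minimizing sequences for $E_\sigma$ and $E_\tau$ one extracts the limiting ``energy densities'' $A_i,B_i$ (or $D_i$) of the separate pieces of the functional, compares them, and in each case scales only one component (by factors like $q_{1,1}=1+\tau_1/\sigma_1$ or $q_{2,2}=1+\tau_2/\sigma_2$) while constructing the other component by disjoint superposition. The strict gain then comes either from the discrepancy $A_1\neq A_2$ or, when these agree, from the superlinear scaling of a single-component energy $e_i$ established in Lemma~\ref{BounMin}(iii). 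To close your argument you would need to supply an analysis of this type; the diagonal scaling you propose is not enough on its own.
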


The following is our orbital stability result of solutions, which is a direct consequence of the
result of relative compactness.

\begin{thm}\label{stabilityTHM}
For any $\tau \in \mathbb{R}^{+}\times\mathbb{R}^{+},$ the set of standing wave profiles $V(\tau)$ is
stable, i.e., for every $\varepsilon>0$ there exists $\delta>0$ such that if the initial
condition $\Psi_0=(\Psi_{1, 0}, \Psi_{2, 0})\in H^\alpha(\mathbb{R}^N)\times H^\alpha(\mathbb{R}^N)$ satisfies
\[
\inf_{\mathbf{u}\in V(\tau)}\|\Psi_0-\mathbf{u} \|_{\alpha}<\delta,
\]
then any solution $\Psi(t)=(\Psi_1(t), \Psi_2(t))$ of the system \eqref{Fnls} emanating from
$\Psi_0$
satisfies
\[
\sup_{t\geq 0}\inf_{\mathbf{u}\in V(\tau)} \|\Psi(t)-\mathbf{u} \|_{\alpha}<\varepsilon.
\]
\end{thm}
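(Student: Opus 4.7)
The plan is to argue by contradiction, reducing the stability statement to the relative compactness result established in part (iii) of the previous theorem. Suppose orbital stability fails. Then there exist $\varepsilon_0>0$, a sequence of initial data $\{\Psi_0^n\}\subset H^\alpha(\mathbb{R}^N)\times H^\alpha(\mathbb{R}^N)$ with
\[
\inf_{\mathbf{u}\in V(\tau)}\|\Psi_0^n-\mathbf{u}\|_{\alpha}\longrightarrow 0,
\]
and times $t_n\ge 0$ such that $\inf_{\mathbf{u}\in V(\tau)}\|\Psi^n(t_n)-\mathbf{u}\|_{\alpha}\ge \varepsilon_0$ for every $n$, where $\Psi^n(t)$ denotes the solution of \eqref{Fnls} issuing from $\Psi_0^n$.

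The first step is to use continuity of the energy functional $E$ and of each $L^2$ norm on $H^\alpha(\mathbb{R}^N)\times H^\alpha(\mathbb{R}^N)$, together with the approximation hypothesis on the initial data, to conclude that $E(\Psi_0^n)\to I_\tau:=\inf_{\mathcal{S}_\tau}E$ and $\|\Psi_{j,0}^n\|_{L^2(\mathbb{R}^N)}^2\to \tau_j$ as $n\to\infty$, for $j=1,2$. Conservation of the energy and of each component's $L^2$ norm along the flow of \eqref{Fnls} then transfers these two limits from $\Psi_0^n$ to $\Psi^n(t_n)$.

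Since Theorem 1.1(iii) requires a sequence lying exactly on $\mathcal{S}_\tau$, I would rescale by setting $\tilde{\Psi}^n=(c_1^n\Psi_1^n(t_n),c_2^n\Psi_2^n(t_n))$ with $c_j^n:=\sqrt{\tau_j}/\|\Psi_j^n(t_n)\|_{L^2(\mathbb{R}^N)}\to 1$, so that $\tilde{\Psi}^n\in\mathcal{S}_\tau$. The subcritical assumption $2<p_j,\,r_1+r_2<2+4\alpha/N$ yields coercivity of $E$ on $\mathcal{S}_\tau$, so any minimizing sequence for $(E,\mathcal{S}_\tau)$ stays bounded in $H^\alpha(\mathbb{R}^N)\times H^\alpha(\mathbb{R}^N)$, and continuity of $E$ on bounded sets (via Sobolev embedding $H^\alpha\hookrightarrow L^p$ in the relevant range) then gives $E(\tilde{\Psi}^n)\to I_\tau$. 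Thus $\{\tilde{\Psi}^n\}$ is a minimizing sequence, and Theorem 1.1(iii) yields
\[
\lim_{n\to\infty}\inf_{\mathbf{u}\in V(\tau)}\|\tilde{\Psi}^n-\mathbf{u}\|_{\alpha}=0.
\]
Combining $c_j^n\to 1$ with the uniform bound $\sup_n\|\Psi^n(t_n)\|_{\alpha}<\infty$ (obtained from the same coercivity) gives $\|\tilde{\Psi}^n-\Psi^n(t_n)\|_{\alpha}\to 0$, and the triangle inequality contradicts the lower bound $\varepsilon_0$ placed on $\Psi^n(t_n)$.

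The main obstacle is not an analytic subtlety in the convergence argument but rather a well-posedness prerequisite that must be in place for the proof even to be formulated: one needs global-in-time existence of $H^\alpha$-valued solutions of \eqref{Fnls} and pointwise-in-time validity of energy and mass conservation, so that $\Psi^n(t_n)$ and $E(\Psi^n(t_n))$ are meaningful. Under \eqref{assump}, one expects this to follow by standard local well-posedness theory for subcritical fractional Schr\"odinger systems together with the a priori $H^\alpha$ bound produced by the conserved energy and $L^2$ masses, which rules out finite-time blow-up and thus prolongs solutions globally.
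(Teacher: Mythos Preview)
Your argument is correct and follows the same classical contradiction scheme as the paper's proof in Section~\ref{stability}: assume stability fails, use continuity of $E$ and the $L^2$ norms together with conservation along the flow to recognize $\{\Psi^n(t_n)\}$ as a minimizing sequence, and then invoke the relative compactness from Theorem~1.1(iii) to reach a contradiction. The only difference is that you insert an explicit rescaling step $\tilde{\Psi}^n=(c_1^n\Psi_1^n(t_n),c_2^n\Psi_2^n(t_n))$ to land exactly on $\mathcal{S}_\tau$ before invoking compactness, whereas the paper applies the existence theorem directly to a sequence whose $L^2$ norms merely converge to $\sqrt{\tau_j}$ (this is consistent with how minimizing sequences are treated in Lemma~\ref{BounMin}, see \eqref{mindef}); your version is slightly more careful but not a different method. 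Your closing remark about the well-posedness prerequisite matches the paper's own caveat that Theorem~\ref{stabilityTHM} ``must however be understood in a qualified sense because we lack a suitable global well-posedness theory for the initial value problem.''
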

Theorem~\ref{stabilityTHM} must however be understood in a
qualified sense because we lack a suitable global well-posedness theory for the
initial value problem.

\smallskip

The paper is organized as follows. In Section~\ref{preli} we provide some well-known
results about fractional Sobolev spaces and prove some preliminary lemmas
which play the most important roles in this paper.
Section~\ref{existence} proves the existence theorem and
the stability result is proved in Section~\ref{stability}.
Throughout this paper, the same letter $C$ might be used to denote
various positive constants which may take different values
within the same string of inequalities.

\section{Preliminary lemmas}\label{preli}

For the reader's convenience,
we first provide some basic properties of the fractional order Sobolev spaces $H^\alpha(\mathbb{R}^N)$ which will be
used throughout the paper.
The first lemma provides an alternative way of defining the fractional Sobolev space $H^\alpha(\mathbb{R}^N).$
\begin{lem}\label{equiHs}
Let $0<\alpha<1$ and $N\geq 1.$ Then
$u\in H^\alpha(\mathbb{R}^N)$ if and only if
\[
u\in L^2(\mathbb{R}^N)\ \textrm{and}\ \ (x,y)\to \frac{|u(x)-u(y)|}{|x-y|^{\frac{N}{2}+\alpha}}\in L^2\left(\mathbb{R}^{2N}, dxdy \right).
\]
Moreover, the norm $\|\cdot \|_{H^\alpha(\mathbb{R}^N)}$ in the space $H^\alpha(\mathbb{R}^N)$ is equivalent to
\[
\vertiii{u}_{H^\alpha(\mathbb{R}^N)}=\left(\|u\|_{L^2(\mathbb{R}^N)}^2+\left[u\right]^{2}_{H^\alpha(\mathbb{R}^N)} \right)^{1/2},
\]
where the quantity $ \left[u\right]^{2}_{H^\alpha(\mathbb{R}^N)}$ (so-called Gagliardo seminorm of $u$) is given by
\[
\left[u\right]^{2}_{H^\alpha(\mathbb{R}^N)}=\iint_{\mathbb{R}^N\times \mathbb{R}^N}\frac{|u(x)-u(y)|^2}{|x-y|^{N+2\alpha}}\ dxdy.
\]

\end{lem}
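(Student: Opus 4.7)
The plan is to reduce everything to a computation on the Fourier side, using Plancherel's theorem, and to extract a scaling identity for the Gagliardo seminorm. The upshot will be a pointwise (in $\xi$) identity of the form $[u]_{H^\alpha(\mathbb{R}^N)}^2 = C(N,\alpha) \int_{\mathbb{R}^N} |\xi|^{2\alpha}|\widehat{u}(\xi)|^2\,d\xi$, after which the equivalence of norms is immediate from the elementary inequality $c_1(1+|\xi|^{2\alpha}) \leq (1+|\xi|^2)^\alpha \leq c_2(1+|\xi|^{2\alpha})$ valid for $0<\alpha<1$ with constants $c_1,c_2>0$.

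First I would introduce the change of variable $z=x-y$ in the Gagliardo seminorm and use Fubini to write
\[
[u]_{H^\alpha(\mathbb{R}^N)}^2 = \int_{\mathbb{R}^N}\frac{1}{|z|^{N+2\alpha}}\left(\int_{\mathbb{R}^N}|u(x+z)-u(x)|^2\,dx\right)dz.
\]
Since translation by $z$ corresponds to multiplication by $e^{iz\cdot\xi}$ on the Fourier side, Plancherel's theorem gives $\int_{\mathbb{R}^N}|u(x+z)-u(x)|^2\,dx = \int_{\mathbb{R}^N}|e^{iz\cdot\xi}-1|^2|\widehat{u}(\xi)|^2\,d\xi$. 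Another application of Fubini then yields
\[
[u]_{H^\alpha(\mathbb{R}^N)}^2 = \int_{\mathbb{R}^N} |\widehat{u}(\xi)|^2\,\Phi(\xi)\,d\xi, \qquad \Phi(\xi) := \int_{\mathbb{R}^N}\frac{|e^{iz\cdot\xi}-1|^2}{|z|^{N+2\alpha}}\,dz.
\]
The key step (and the main technical point) is to verify that $\Phi(\xi) = C(N,\alpha)|\xi|^{2\alpha}$ with $0<C(N,\alpha)<\infty$. Finiteness of $C(N,\alpha) := \Phi(e_1)$ follows from the bound $|e^{iz\cdot e_1}-1|^2 \leq \min(|z|^2, 4)$: near the origin the integrand is controlled by $|z|^{2-N-2\alpha}$, which is integrable since $2\alpha<2$, and at infinity by $|z|^{-N-2\alpha}$, which is integrable since $2\alpha>0$. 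The scaling law $\Phi(\xi)=|\xi|^{2\alpha}\Phi(\xi/|\xi|)$ follows from the substitution $z\mapsto z/|\xi|$, and rotational invariance of Lebesgue measure gives $\Phi(\xi/|\xi|)=\Phi(e_1)$.

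Combining this identity with Plancherel applied to $\|u\|_{L^2(\mathbb{R}^N)}^2 = \int|\widehat{u}(\xi)|^2\,d\xi$ yields
\[
\vertiii{u}_{H^\alpha(\mathbb{R}^N)}^2 = \int_{\mathbb{R}^N}\bigl(1+C(N,\alpha)|\xi|^{2\alpha}\bigr)|\widehat{u}(\xi)|^2\,d\xi,
\]
which, by the elementary two-sided bound comparing $1+|\xi|^{2\alpha}$ with $(1+|\xi|^2)^\alpha$, is equivalent to $\|u\|_{H^\alpha(\mathbb{R}^N)}^2$. This simultaneously proves the ``if and only if'' characterization and the norm equivalence, since $u\in L^2(\mathbb{R}^N)$ together with finiteness of the double integral is equivalent to $(1+|\xi|^2)^{\alpha/2}\widehat{u}\in L^2(\mathbb{R}^N)$. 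To handle the fact that one must interpret $\widehat{u}$ as a tempered distribution when $u$ is only assumed to lie in $L^2(\mathbb{R}^N)$, I would first carry out the calculation above for $u\in\mathcal{S}(\mathbb{R}^N)$, where every step is justified, and then extend by density, using that $\mathcal{S}(\mathbb{R}^N)$ is dense in both spaces under their respective norms.
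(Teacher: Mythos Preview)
The paper does not actually prove this lemma; it is stated as a known background fact about fractional Sobolev spaces, with the reference \cite{NPV} (Di Nezza--Palatucci--Valdinoci) implicitly serving as the source, as it does explicitly for the neighboring Lemmas~\ref{LaEqi} and the one following it. Your argument is correct and is precisely the standard proof one finds in that reference: the substitution $z=y-x$ followed by Plancherel in the $x$-variable, the scaling/rotation computation showing $\Phi(\xi)=C(N,\alpha)|\xi|^{2\alpha}$ with the integrability of $\Phi(e_1)$ checked via $|e^{iz\cdot e_1}-1|^2\le\min(|z|^2,4)$, and finally the elementary comparison of $1+|\xi|^{2\alpha}$ with $(1+|\xi|^2)^\alpha$. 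The density remark at the end is the right way to pass from Schwartz functions to general $u$, so nothing is missing.
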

The proof of the following alternative definition of the fractional Laplacian operator can be found, for example, in \cite{NPV}.
\begin{lem}\label{LaEqi}
For any $0<\alpha<1,$ the fractional Laplacian operator $(-\Delta)^\alpha:\mathcal{S}(\mathbb{R}^N)\to L^2(\mathbb{R}^N)$ as
defined in \eqref{FLdef} can be expressed as
\[
\begin{aligned}
(-\Delta)^{\alpha}u(x) & =C(N,\alpha)~\textrm{P.V.} \int_{\mathbb{R}^N}\frac{u(x)-u(y)}{|x-y|^{N+2\alpha}}dy\\
 & = C(N,\alpha)\lim_{\varepsilon\to 0+}\int_{\mathbb{R}^N\setminus B_\varepsilon(x)}\frac{u(x)-u(y)}{|x-y|^{N+2\alpha}}dy,
 \end{aligned}
\]
where P.V. denotes abbreviation for the Cauchy principal value of the singular integral and $C(N,\alpha)>0$ is some normalization constant given by
\[
C(N,\alpha)=\left(\int_{\mathbb{R}^N}\frac{1-\cos x_1}{|x|^{N+2\alpha}}\ dx\right)^{-1}=\frac{\Gamma((2\alpha+N)/2)}{|\Gamma(-\alpha)|}2^{2\alpha-1}\pi^{-N/2}.
\]
\end{lem}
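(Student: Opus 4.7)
The plan is to work directly from the Fourier definition \eqref{FLdef} and show that the singular-integral representation has the same Fourier transform as $(-\Delta)^\alpha u$. First I would symmetrize the singular integral. For any $u\in\mathcal{S}(\mathbb{R}^N)$ and $\varepsilon>0$, translating the integration variable by $x$ and then averaging with its reflection $y\mapsto -y$ gives
\begin{equation*}
\int_{\mathbb{R}^N\setminus B_\varepsilon(x)}\frac{u(x)-u(y)}{|x-y|^{N+2\alpha}}dy = \frac{1}{2}\int_{|y|\geq\varepsilon}\frac{2u(x)-u(x+y)-u(x-y)}{|y|^{N+2\alpha}}dy.
\end{equation*}
Since $u$ is Schwartz, a second-order Taylor expansion (the first-order terms cancel by symmetry) provides the uniform bound $|2u(x)-u(x+y)-u(x-y)|\leq C\min(|y|^2,1)$, and together with $0<\alpha<1$ this makes the right-hand side absolutely integrable as $\varepsilon\to 0$. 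Hence the principal-value limit on the left exists and equals
\begin{equation*}
\mathrm{P.V.}\int_{\mathbb{R}^N}\frac{u(x)-u(y)}{|x-y|^{N+2\alpha}}dy = \frac{1}{2}\int_{\mathbb{R}^N}\frac{2u(x)-u(x+y)-u(x-y)}{|y|^{N+2\alpha}}dy.
\end{equation*}

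Next I would identify the Fourier transform of the absolutely convergent expression on the right. Since $\widehat{u(\cdot\pm y)}(\xi)=e^{\pm iy\cdot\xi}\widehat{u}(\xi)$, Fubini's theorem (justified by the bound above together with the Schwartz decay of $\widehat{u}$) yields
\begin{equation*}
\mathcal{F}\!\left[\frac{1}{2}\int_{\mathbb{R}^N}\frac{2u(\cdot)-u(\cdot+y)-u(\cdot-y)}{|y|^{N+2\alpha}}dy\right](\xi) = \widehat{u}(\xi)\int_{\mathbb{R}^N}\frac{1-\cos(y\cdot\xi)}{|y|^{N+2\alpha}}dy.
\end{equation*}
A rotation sending $\xi/|\xi|$ to $e_1$ followed by the rescaling $y\mapsto y/|\xi|$ shows the $y$-integral equals $|\xi|^{2\alpha}\int_{\mathbb{R}^N}|x|^{-N-2\alpha}(1-\cos x_1)dx = C(N,\alpha)^{-1}|\xi|^{2\alpha}$. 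Multiplying by $C(N,\alpha)$ and inverting the Fourier transform then produces the identity claimed in the lemma.

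Finally I would justify the closed form for $C(N,\alpha)$. Writing the defining integral in coordinates adapted to the $x_1$ axis and integrating out the transverse variables (via a Beta-function factor involving the surface measure of $S^{N-2}$) reduces $\int_{\mathbb{R}^N}|x|^{-N-2\alpha}(1-\cos x_1)dx$ to a scalar multiple of the classical Mellin integral $\int_0^\infty s^{-1-2\alpha}(1-\cos s)ds = -\Gamma(-2\alpha)\cos(\pi\alpha)$, valid for $0<\alpha<1$. Applying the duplication and reflection formulas for $\Gamma$ to rewrite this in terms of $\Gamma((N+2\alpha)/2)$ and $\Gamma(-\alpha)$ yields the stated expression.

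The main obstacle is the arithmetic in the last step: the passage from the multivariate integral to the quoted ratio of Gamma functions requires carefully tracking several factors of $\pi$ and $2$ through the duplication and reflection identities. Everything else—the symmetrization, the principal-value convergence, and the interchange of Fourier transform with the absolutely convergent integral—is a routine dominated-convergence argument once the quadratic Taylor bound is in hand.
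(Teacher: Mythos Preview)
Your argument is correct and follows the standard route (symmetrize to remove the singularity, then compare Fourier transforms, then evaluate the normalization constant via rotation/scaling and Gamma-function identities). Note, however, that the paper does not actually prove this lemma at all: it simply cites \cite{NPV} for the proof, so there is no ``paper's own proof'' to compare against beyond observing that your outline is essentially the argument given in that reference.
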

This above integral representation shows the nonlocal feature of the fractional Laplacian $(-\Delta)^\alpha u$ and
can be used to define the operator for more
general functions, for example, $u\in C^2(\mathbb{R}^N).$
The next lemma provides the relationship between the fractional Laplacian $(-\Delta)^{\alpha}$ and
the fractional Sobolev space $H^\alpha(\mathbb{R}^N)$ (for a proof, see Propositions 4.2 and 4.4 of \cite{NPV}).
\begin{lem}
Let $0<\alpha<1.$ Then for all $u\in H^\alpha(\mathbb{R}^N),$
\[
\left[u\right]^{2}_{H^\alpha(\mathbb{R}^N)}
= \left( \frac{C_{N,\alpha}}{2}\right)^{-1}\int_{\mathbb{R}^N}|\xi|^{2\alpha}|\widehat{u}(\xi)|^2\ d\xi=\left( \frac{C_{N,\alpha}}{2}\right)^{-1}\|(-\Delta)^{\alpha/2}u \|_{L^2(\mathbb{R}^N)}^2.
\]
\end{lem}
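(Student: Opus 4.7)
The second equality is immediate from Plancherel's theorem applied to the defining relation \eqref{FLdef}: since $\widehat{(-\Delta)^{\alpha/2}u}(\xi)=|\xi|^{\alpha}\widehat{u}(\xi)$, one has $\|(-\Delta)^{\alpha/2}u\|_{L^2(\mathbb{R}^N)}^2 = \int_{\mathbb{R}^N}|\xi|^{2\alpha}|\widehat{u}(\xi)|^2\,d\xi$. The real work is in the first equality, and my plan is to pass to Fourier variables through translations.

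First I would change variables $z=x-y$ in the Gagliardo double integral to get
\[
[u]^2_{H^\alpha(\mathbb{R}^N)} = \int_{\mathbb{R}^N}\frac{dz}{|z|^{N+2\alpha}}\int_{\mathbb{R}^N}|u(y+z)-u(y)|^2\,dy.
\]
For each fixed $z$, Plancherel applied to $y\mapsto u(y+z)-u(y)$, whose Fourier transform is $(e^{iz\cdot\xi}-1)\widehat{u}(\xi)$, yields
\[
\int_{\mathbb{R}^N}|u(y+z)-u(y)|^2\,dy = 2\int_{\mathbb{R}^N}(1-\cos(z\cdot\xi))|\widehat{u}(\xi)|^2\,d\xi.
\]
Since the integrand $(1-\cos(z\cdot\xi))|\widehat{u}(\xi)|^2/|z|^{N+2\alpha}$ is non-negative, Tonelli's theorem allows the order of integration to be swapped unconditionally, reducing the identity to evaluating the one-variable singular integral $\int_{\mathbb{R}^N}(1-\cos(z\cdot\xi))/|z|^{N+2\alpha}\,dz$ for fixed $\xi$.

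The key step is then the computation of this inner integral. By rotational invariance I would rotate $z$ so that $\xi$ is aligned with the first coordinate axis, and then rescale $z\mapsto z/|\xi|$; the integral becomes
\[
\int_{\mathbb{R}^N}\frac{1-\cos(z\cdot\xi)}{|z|^{N+2\alpha}}\,dz = |\xi|^{2\alpha}\int_{\mathbb{R}^N}\frac{1-\cos x_1}{|x|^{N+2\alpha}}\,dx = C(N,\alpha)^{-1}|\xi|^{2\alpha},
\]
where the last equality is precisely the definition of the normalization constant recorded in Lemma~\ref{LaEqi}. Substituting back yields $[u]^2_{H^\alpha(\mathbb{R}^N)}=\frac{2}{C(N,\alpha)}\int_{\mathbb{R}^N}|\xi|^{2\alpha}|\widehat{u}(\xi)|^2\,d\xi$, which is the claimed identity.

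The one point that deserves care is the convergence of the scalar integral $\int_{\mathbb{R}^N}(1-\cos x_1)/|x|^{N+2\alpha}\,dx$, since one must verify that $C(N,\alpha)$ is genuinely finite and positive. This follows by splitting the domain into $\{|x|\le 1\}$ and $\{|x|\ge 1\}$: near the origin one uses the bound $1-\cos x_1\le \tfrac{1}{2}|x|^2$, which tames the singularity because $\alpha<1$, while on the complement $1-\cos x_1\le 2$ suffices because $\alpha>0$. I expect this to be the only analytic subtlety; once it is settled, the computation proceeds cleanly for every $u\in H^\alpha(\mathbb{R}^N)$ without any need for a separate density argument through $\mathcal{S}(\mathbb{R}^N)$.
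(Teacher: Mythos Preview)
Your argument is correct and is precisely the standard computation. Note, however, that the paper does not supply its own proof of this lemma: immediately before the statement it simply refers the reader to Propositions~4.2 and~4.4 of \cite{NPV}. The proof you have written is essentially the one carried out there---translate and apply Plancherel to reduce to the scalar integral $\int_{\mathbb{R}^N}(1-\cos x_1)/|x|^{N+2\alpha}\,dx$, then identify that integral with $C(N,\alpha)^{-1}$---so your proposal fills in exactly what the cited reference contains, and there is nothing to compare beyond that.
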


The following lemma is the Sobolev-type inequality for the fractional order Sobolev spaces.
An elementary proof of this is given in Theorem~6.5 of \cite{NPV}.
\begin{lem}\label{sobInq}
Let $0<\alpha<1$ be such that $N>2\alpha.$ Then there exists a positive constant $C=C(N,\alpha)$ such that
\[
\|u\|_{L^{2_\alpha^\star}(\mathbb{R}^N)}^2\leq C \left[u\right]_{H^\alpha(\mathbb{R}^N)}^2,\  \forall u\in H^\alpha(\mathbb{R}^N),
\]
where $2_\alpha^\star\equiv 2_\alpha^\star(N, \alpha)=\frac{2N}{N-2\alpha}$ is the fractional critical exponent and
$\left[u\right]^{2}_{H^\alpha(\mathbb{R}^N)}$
is the Gagliardo seminorm of $u$ as defined in Lemma~\ref{equiHs}.

\smallskip

Consequently, the space $H^\alpha (\mathbb{R}^N)$ is continuously embedded into
$L^q(\mathbb{R}^N)$ for any $q\in [2,2_\alpha^\star]$ and compactly embedded into $L_{\textrm{loc}}^q(\mathbb{R}^N)$
for any $q\in [2,2_\alpha^\star).$
\end{lem}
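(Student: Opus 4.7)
The plan is to first establish the Sobolev-type inequality using the Fourier representation of the Gagliardo seminorm, and then to derive the continuous and compact embedding assertions as consequences. By density of $\mathcal{S}(\mathbb{R}^N)$ in $H^\alpha(\mathbb{R}^N),$ it suffices to prove the inequality for $u\in \mathcal{S}(\mathbb{R}^N).$ The lemma immediately preceding Lemma~\ref{sobInq} gives $[u]^{2}_{H^\alpha(\mathbb{R}^N)}=(C_{N,\alpha}/2)^{-1}\|(-\Delta)^{\alpha/2}u\|_{L^2}^{2},$ so the desired estimate is equivalent to the classical Stein--Sobolev bound $\|u\|_{L^{2_\alpha^\star}}\leq C\,\|(-\Delta)^{\alpha/2}u\|_{L^2}.$ To prove the latter, I would write $u=I_\alpha f$ with $f=(-\Delta)^{\alpha/2}u\in L^2(\mathbb{R}^N),$ where $I_\alpha$ denotes the Riesz potential of order $2\alpha,$ corresponding on the Fourier side to multiplication by $|\xi|^{-2\alpha}$ and in physical space to convolution against a constant multiple of $|x|^{2\alpha-N}.$ The key step is then the Hardy--Littlewood--Sobolev inequality applied with $p=2$ and target exponent $q=2N/(N-2\alpha)=2_\alpha^\star,$ which delivers $\|I_\alpha f\|_{L^{2_\alpha^\star}}\leq C\|f\|_{L^2};$ the hypothesis $N>2\alpha$ is precisely what makes $2<2_\alpha^\star<\infty$ and puts us in the admissible HLS range.

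For the embedding consequences, the continuous embedding $H^\alpha(\mathbb{R}^N)\hookrightarrow L^q(\mathbb{R}^N)$ for $q\in [2,2_\alpha^\star]$ follows by interpolation: writing $1/q=\theta/2+(1-\theta)/2_\alpha^\star$ with $\theta\in[0,1],$ H\"older's inequality together with the Sobolev bound gives $\|u\|_{L^q}\leq \|u\|_{L^2}^{\theta}\|u\|_{L^{2_\alpha^\star}}^{1-\theta}\leq C\|u\|_{H^\alpha(\mathbb{R}^N)}.$ For the local compactness into $L^q_{\mathrm{loc}}$ when $q<2_\alpha^\star,$ I would fix a bounded open set $\Omega\subset \mathbb{R}^N$ and check the hypotheses of the Fr\'echet--Kolmogorov criterion for a bounded sequence $\{u_n\}\subset H^\alpha(\mathbb{R}^N):$ equi-integrability at infinity is automatic after restriction to $\Omega,$ while uniform $L^2$-continuity of translations follows directly from Lemma~\ref{equiHs}, since $\|u_n(\cdot+h)-u_n\|_{L^2}^2$ is controlled (after a change of variables) by $|h|^{2\alpha}\,[u_n]_{H^\alpha(\mathbb{R}^N)}^{2}$ up to a constant depending only on $N$ and $\alpha.$ This yields $L^2_{\mathrm{loc}}$ precompactness, which is upgraded to $L^q_{\mathrm{loc}}$ for $q<2_\alpha^\star$ by interpolating against the uniform $L^{2_\alpha^\star}$ bound coming from part (i) of the lemma.

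The main obstacle is really the Sobolev inequality itself: the Fourier/Riesz route above is short but treats the Hardy--Littlewood--Sobolev inequality as a black box. A self-contained proof working directly from the Gagliardo seminorm, of the sort carried out in \cite{NPV}, would instead use a dyadic decomposition $u=\sum_k (u_{k+1}-u_k)$ with $u_k$ suitable averages of $u$ over balls of radius $2^{-k},$ control each block by Cauchy--Schwarz against the Gagliardo double integral, and then reassemble the pieces via the layer-cake representation to recover the $L^{2_\alpha^\star}$ bound. This elementary route avoids Fourier analysis at the cost of substantial combinatorial bookkeeping, which is why I would favor the Fourier approach above.
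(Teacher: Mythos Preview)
The paper does not actually prove this lemma: the sentence introducing it reads ``An elementary proof of this is given in Theorem~6.5 of \cite{NPV},'' and nothing further is supplied. Your proposal therefore goes well beyond what the paper does. Your Fourier/Hardy--Littlewood--Sobolev route for the main inequality is correct and standard, and your derivations of the continuous embedding by $L^p$-interpolation and of the local compact embedding via the Fr\'echet--Kolmogorov criterion (uniform $L^2$-translation continuity from the seminorm, then interpolation against the uniform $L^{2_\alpha^\star}$ bound to reach any $q<2_\alpha^\star$) are also sound. You even sketch the \cite{NPV} dyadic-average argument as the alternative, which is precisely the reference the paper defers to; so in effect you cover both the paper's cited route and a genuinely different, Fourier-analytic one.

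One small slip worth fixing: if $f=(-\Delta)^{\alpha/2}u$ then $\widehat{f}(\xi)=|\xi|^{\alpha}\widehat{u}(\xi)$, so recovering $u$ requires the Riesz potential of order~$\alpha$ (Fourier multiplier $|\xi|^{-\alpha}$, kernel a constant multiple of $|x|^{\alpha-N}$), not order $2\alpha$ with multiplier $|\xi|^{-2\alpha}$ as you wrote. With that correction the HLS exponents line up exactly as you state, $p=2$ and $q=2N/(N-2\alpha)=2_\alpha^\star$, and the hypothesis $N>2\alpha$ is what keeps $q$ finite.
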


The next lemma is the fractional Gagliardo-Nirenberg inequality.
\begin{lem}
Let $1\leq p<\infty, 0<\alpha<1,$ and $N>2\alpha.$ Then for any $u\in H^\alpha(\mathbb{R}^N),$
\[
\|u\|_{L^p(\mathbb{R}^N)}\leq C \left[u \right]_{H^\alpha(\mathbb{R}^N)}^\lambda \|u\|_{L^q(\mathbb{R}^N)}^{1-\lambda},
\]
where $q\geq 1, \lambda\in [0,1],$ $C=C_{N,\alpha, \lambda}$ is a positive constant, and $\lambda$ satisfies
\[
\frac{N}{p}=\frac{\lambda(N-2\alpha)}{2}+\frac{N(1-\lambda)}{q}.
\]
\end{lem}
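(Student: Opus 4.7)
The plan is to derive the inequality by combining two tools already in the paper: the fractional Sobolev embedding of Lemma~\ref{sobInq} and the classical $L^p$-interpolation inequality. Since Lemma~\ref{sobInq} provides
\[
\|u\|_{L^{2_\alpha^\star}(\mathbb{R}^N)} \leq C\, [u]_{H^\alpha(\mathbb{R}^N)}, \qquad 2_\alpha^\star=\tfrac{2N}{N-2\alpha},
\]
the task reduces to interpolating the $L^p$-norm between $L^q$ and $L^{2_\alpha^\star}$, and then inserting the embedding into the $L^{2_\alpha^\star}$-factor.

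More precisely, I would first choose $\lambda\in[0,1]$ via
\[
\frac{1}{p} \;=\; \frac{\lambda}{2_\alpha^\star} \;+\; \frac{1-\lambda}{q},
\]
which is solvable whenever $p$ lies in the closed interval with endpoints $q$ and $2_\alpha^\star$. Applying Hölder's inequality to the factorization $|u|^p = |u|^{p\lambda}\cdot |u|^{p(1-\lambda)}$ with conjugate exponents $\tfrac{2_\alpha^\star}{p\lambda}$ and $\tfrac{q}{p(1-\lambda)}$ yields the log-convexity estimate
\[
\|u\|_{L^p(\mathbb{R}^N)} \;\leq\; \|u\|_{L^{2_\alpha^\star}(\mathbb{R}^N)}^{\lambda}\, \|u\|_{L^q(\mathbb{R}^N)}^{1-\lambda}.
\]
Substituting the Sobolev bound into the first factor then gives $\|u\|_{L^p}\leq C\,[u]_{H^\alpha}^{\lambda}\|u\|_{L^q}^{1-\lambda}$. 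Multiplying the defining relation for $\lambda$ through by $N$ and using $\tfrac{N}{2_\alpha^\star}=\tfrac{N-2\alpha}{2}$ reproduces exactly the scaling identity
\[
\frac{N}{p} \;=\; \frac{\lambda(N-2\alpha)}{2} \;+\; \frac{N(1-\lambda)}{q}
\]
stated in the lemma, so the exponent condition is not an extra hypothesis but an automatic consequence.

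The only mildly delicate point, and the place a careful treatment is needed, is the admissible range of $(p,q)$: the interpolation identity for $\lambda$ has a solution in $[0,1]$ precisely when $p$ lies between $q$ and $2_\alpha^\star$. The boundary cases are immediate: $\lambda=0$ forces $p=q$ and the inequality is trivial, while $\lambda=1$ forces $p=2_\alpha^\star$ and the statement reduces to Lemma~\ref{sobInq} itself. The intermediate case is exactly where the Hölder step contributes. Finally, the constant $C=C_{N,\alpha,\lambda}$ absorbs the Sobolev constant from Lemma~\ref{sobInq}, and one observes that $C$ depends continuously on $\lambda$ on any compact subinterval of $[0,1]$, so no further uniformity work is needed for the applications to the variational problem $(E,\mathcal{S}_\tau)$ made later in the paper.
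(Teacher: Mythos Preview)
Your proof is correct and follows essentially the same route as the paper: interpolate $\|u\|_{L^p}$ between $\|u\|_{L^q}$ and $\|u\|_{L^{2_\alpha^\star}}$ via H\"older, then replace the $L^{2_\alpha^\star}$-factor using the fractional Sobolev embedding of Lemma~\ref{sobInq}. The paper's argument is identical in substance, only slightly terser about the boundary cases and the admissible range of $(p,q)$ that you spell out.
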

\begin{proof}
This is a consequence of the H\"{o}lder inequality and the Sobolev-type inequality (Lemma~\ref{sobInq}). The case $p=1$ is clear. For $p>1,$ using the H\"{o}lder inequality, one has
\[
\|u\|_{L^p(\mathbb{R}^N)}\leq \|u\|_{L^{2_{\alpha}^\star }(\mathbb{R}^N) }^{\lambda}\|u\|_{L^q(\mathbb{R}^N) }^{1-\lambda},\ \textrm{where}\ \ \frac{1}{p}=\frac{1-\lambda}{q}+\frac{\lambda}{2_{\alpha}^\star }.
\]
Now using the Sobolev inequality (Lemma~\ref{sobInq}), we obtain that
\[
\|u\|_{L^p(\mathbb{R}^N)}\leq (C_{N,\alpha})^{\lambda/2} \left[u \right]_{H^\alpha(\mathbb{R}^N) }^\lambda \|u\|_{L^q(\mathbb{R}^N) }^{1-\lambda},
\]
where $C_{N,\alpha}$ is the same constant as in Lemma~\ref{sobInq}.
The desired inequality follows by taking $C=(C_{N,\alpha})^{\lambda/2}$ in the last inequality.
\end{proof}

We now turn our attention to proving some properties of the variational problem $(E,\mathcal{S}_\tau)$ and its minimizing sequences,
which will be used in the proof of the existence theorem.
Throughout the rest of this paper, we shall use the following notation
\begin{equation}\label{singleEng}
e_i(u)=\frac{1}{2}\|D^\alpha u\|_{L^2(\mathbb{R}^N)}^2 -\frac{\mu_i}{p_i}\|u\|_{L^{p_i}(\mathbb{R}^N)}^{p_i}\ \textrm{for}\ i=1,2,
\end{equation}
where $D^\alpha=\left(-\Delta \right)^{\alpha/2}.$ We denote by $\gamma_1=\mu_1/ p_1$ and $\gamma_2=\mu_2/p_2$ the
constants appearing in \eqref{singleEng}.
For any $\beta=(\beta_1, \beta_2),$ we define the function $E_\beta$ by
\begin{equation}
E_\beta=\inf\left\{E(u_1,u_2):(u_1,u_2)\in \mathcal{S}_\beta\right\}.
\end{equation}
We first prove that $E_\tau$ is finite and negative.
\begin{lem}\label{NegInfE}
For any $\tau\in \mathbb{R}^{+}\times\mathbb{R}^{+},$ one has
$
-\infty<E_\tau<0.
$
\end{lem}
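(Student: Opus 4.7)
The plan is to establish the two inequalities separately, using the fractional Gagliardo--Nirenberg inequality for the lower bound and an $L^{2}$-preserving rescaling for the upper bound. The subcritical assumptions on the exponents in \eqref{assump} are precisely what makes both arguments go through.

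For the lower bound, I would estimate each nonlinear piece of $E$ by powers of $\|D^\alpha u_i\|_{L^2}$ whose exponent is strictly less than $2$. For each $p_i$, the fractional Gagliardo--Nirenberg inequality gives
\[
\|u_i\|_{L^{p_i}(\mathbb{R}^N)}^{p_i}\leq C \bigl[u_i\bigr]_{H^\alpha(\mathbb{R}^N)}^{p_i\lambda_i}\|u_i\|_{L^2(\mathbb{R}^N)}^{p_i(1-\lambda_i)},
\qquad p_i\lambda_i=\frac{N(p_i-2)}{2\alpha},
\]
and the assumption $p_i<2+\tfrac{4\alpha}{N}$ is exactly $p_i\lambda_i<2$. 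For the coupling term, I would first apply Young's inequality
\[
|u_1|^{r_1}|u_2|^{r_2}\leq \frac{r_1}{r_1+r_2}|u_1|^{r_1+r_2}+\frac{r_2}{r_1+r_2}|u_2|^{r_1+r_2},
\]
and then the same fractional Gagliardo--Nirenberg inequality with exponent $r_1+r_2$; the assumption $r_1+r_2<2+\tfrac{4\alpha}{N}$ gives an exponent strictly less than $2$ on the $H^\alpha$-seminorm as well. Using $\|u_i\|_{L^2}^2=\tau_i$ to absorb the $L^2$-factors into constants depending only on $\tau$, and then applying the elementary inequality $Ax^{\theta}\leq \tfrac14 x^2+C(A,\theta)$ valid for $\theta<2$ and $x\geq 0$, I would obtain
\[
E(u_1,u_2)\geq \frac{1}{4}\bigl(\|D^\alpha u_1\|_{L^2}^2+\|D^\alpha u_2\|_{L^2}^2\bigr)-K(\tau),
\]
which yields $E_\tau\geq -K(\tau)>-\infty$.

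For the upper bound I would exhibit a single pair at which $E$ is negative. Fix any $(v_1,v_2)\in\mathcal{S}_\tau$ (for instance, smooth and compactly supported) and introduce the $L^2$-preserving dilations
\[
v_i^{t}(x)=t^{N/2}v_i(tx),\qquad t>0,
\]
so that $\|v_i^t\|_{L^2}^2=\tau_i$ for every $t>0$ and $(v_1^t,v_2^t)\in\mathcal{S}_\tau$. A direct change of variables gives the scaling laws
\[
\|D^\alpha v_i^t\|_{L^2}^2=t^{2\alpha}\|D^\alpha v_i\|_{L^2}^2,\quad
\|v_i^t\|_{L^{p_i}}^{p_i}=t^{N(p_i/2-1)}\|v_i\|_{L^{p_i}}^{p_i},
\]
and analogously $\int|v_1^t|^{r_1}|v_2^t|^{r_2}\,dx=t^{N((r_1+r_2)/2-1)}\int|v_1|^{r_1}|v_2|^{r_2}\,dx$. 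Thus
\[
E(v_1^t,v_2^t)=\tfrac{t^{2\alpha}}{2}\bigl(\|D^\alpha v_1\|_{L^2}^2+\|D^\alpha v_2\|_{L^2}^2\bigr)-\sum_{i=1}^2 t^{N(p_i/2-1)}\tfrac{\mu_i}{p_i}\|v_i\|_{L^{p_i}}^{p_i}-t^{N((r_1+r_2)/2-1)}\beta\!\int|v_1|^{r_1}|v_2|^{r_2}\,dx.
\]
The hypotheses $p_i>2$ and $r_1+r_2>2$ make the nonlinear exponents positive, while the upper bounds $p_i,\,r_1+r_2<2+\tfrac{4\alpha}{N}$ give
\[
N\bigl(p_i/2-1\bigr)<2\alpha,\qquad N\bigl((r_1+r_2)/2-1\bigr)<2\alpha.
\]
Hence as $t\to 0^+$ the negative nonlinear contributions dominate the positive kinetic term, so $E(v_1^t,v_2^t)<0$ for all sufficiently small $t$, which proves $E_\tau<0$.

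The routine work is the Gagliardo--Nirenberg bookkeeping; the only real point requiring care is checking that every nonlinear exponent is strictly subcritical in \emph{both} the upper-bound rescaling and the lower-bound absorption, and this is precisely what the condition $r_i>1$ together with $p_j,r_1+r_2\in(2,\,2+4\alpha/N)$ in \eqref{assump} guarantees.
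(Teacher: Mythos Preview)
Your proof is correct and follows essentially the same approach as the paper: an $L^2$-preserving dilation to show $E_\tau<0$ and Gagliardo--Nirenberg plus Young's inequality to absorb the subcritical nonlinear terms for $E_\tau>-\infty$. The only minor difference is that you handle the coupling term $F$ explicitly via the pointwise Young inequality $|u_1|^{r_1}|u_2|^{r_2}\leq c_1|u_1|^{r_1+r_2}+c_2|u_2|^{r_1+r_2}$, whereas the paper's proof of this lemma elides the treatment of $F$ (it is bounded via H\"older in the subsequent lemma); your version is if anything more self-contained.
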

\noindent{\bf Proof.}\ The proof that $E_\tau<0$ follows a standard
scaling argument. Indeed, for a given $(u_1,u_2)\in \mathcal{S}_\tau,$ define $u_1^{\lambda}=\lambda^{1/2}u_1(\lambda^{1/N} x)$ and
$u_2^{\lambda}=\lambda^{1/2}u_2(\lambda^{1/N} x)$ for any $\lambda>0.$ Then, we have that $(u_1^{\lambda}, u_2^{\lambda})\in \mathcal{S}_\tau$ as well.
Since the fractional Laplacian $D^\alpha$ behaves like
differentiation of order $\alpha,$ i.e., $D^\alpha h_\theta(x)=\theta^{\alpha}D^\alpha h(\theta x)$ for any $\theta>0$ and $h_\theta(x)=h(\theta x),\ x\in \mathbb{R}^N,$ we obtain that
\[
E\left(u_1^{\lambda}, u_2^{\lambda} \right)\leq \frac{\lambda^{2\alpha/N}}{2} \int_{\mathbb{R}^N}\left(|D^\alpha u_1|^2+|D^\alpha u_2|^2\right)\ dx-\lambda^{\theta}\int_{\mathbb{R}^N}F(u_1,u_2)\ dx
\]
where $\theta=\frac{(r_1+r_2)-2}{2}<\frac{2\alpha}{N}$ by $r_1+r_2<2+\frac{4\alpha}{N}.$ Thus one can
take $\lambda>0$ sufficiently small such that $ E\left(u_1^{\lambda}, u_2^{\lambda} \right)<0$ and
consequently $E_\tau<E\left(u_1^{\lambda}, u_2^{\lambda} \right)<0.$

\smallskip

Next, making use of the H\"{o}lder inequality and the Sobolev inequality, we obtain
\begin{equation}\label{SobInq}
\begin{aligned}
\left(\int_{\mathbb{R}^N}|u_1|^{p_1}\ dx \right)^{1/p_1} & \leq \left(\int_{\mathbb{R}^N}|u_1|^{2}\ dx \right)^{(1-\lambda)/2}\left(\int_{\mathbb{R}^N}|u_1|^{2_\alpha^\star}\ dx \right)^{\lambda/2_\alpha^\star}\\
& \leq C\left(\int_{\mathbb{R}^N}|u_1|^{2}\ dx \right)^{(1-\lambda)/2} \vertiii{u_1}_{H^\alpha(\mathbb{R}^N)}^{\lambda} ,\
\end{aligned}
\end{equation}
where $\lambda=N(p_1-2)/2p_1 \alpha.$ We now use the Young inequality to obtain
\[
\int_{\mathbb{R}^N}|u_1|^{p_1}\ dx \leq \varepsilon \vertiii{u_1}_{H^\alpha(\mathbb{R}^N)}^2 + C_\varepsilon \left( \int_{\mathbb{R}^N}|u_1|^{2}\ dx\right)^\mu,\ \mu=\frac{(1-\lambda)p_1}{2-\lambda p_1},
\]
for sufficiently small $\varepsilon>0$ and $C_\varepsilon$ depends on $\varepsilon$ but not on $u_1.$
Similar inequality holds for $\int_{\mathbb{R}^N}|u_2|^{p_2}\ dx.$ Then, for any $(u_1,u_2)\in \mathcal{S}_\tau,$ one obtains that
\[
\begin{aligned}
E(u_1,u_2)& \geq \frac{1}{2}\vertiii{u_1}_{H^\alpha(\mathbb{R}^N)}^2+\frac{1}{2}\vertiii{u_2}_{H^\alpha(\mathbb{R}^N)}^2
-C\left(\|u_1\|_{L^{p_1}(\mathbb{R}^N) }^{p_1}+\|u_2\|_{L^{p_2}(\mathbb{R}^N)}^{p_2}\right)-\tau_3\\
& \geq \frac{1-\varepsilon}{2} \left(\vertiii{u_1}_{H^\alpha(\mathbb{R}^N)}^2+\vertiii{u_2}_{H^\alpha(\mathbb{R}^N)}^2 \right)-C,
\end{aligned}
\]
where $\tau_3= (\tau_1+\tau_2)/2$ and $C=C(N, p_1, p_2, \alpha, \varepsilon, \tau)$ are positive constants. Taking $2\varepsilon<1,$ this
last inequality shows that $E_\tau>-\infty$ holds for all $N<\infty.$ \hfill{$\Box$}

\smallskip

In the next lemma, we prove that any energy-minimizing sequence
for the problem $(E,\mathcal{S}_\tau)$ must be bounded in $H^\alpha(\mathbb{R}^N) \times H^\alpha(\mathbb{R}^N)$ and
collect some of their special properties.

\begin{lem}\label{BounMin}
Suppose $\{(u_1^{n},u_2^{n})\}_{n\geq 1}$ be any sequence of functions in $H^\alpha(\mathbb{R}^N)\times H^\alpha(\mathbb{R}^N)$ such that
\begin{equation}\label{mindef}
\lim_{n\to \infty}\|u_i^{n}\|_{L^2(\mathbb{R}^N)}=\sqrt{\tau_i}\ \ \textrm{and}\ \ \lim_{n\to \infty}E(u_1^{n},u_2^{n})=E_\tau.
\end{equation}
Then the following assertions hold:

\smallskip

(i) there exists $B>0$ such that $\vertiii{u_1^{n}}_{H^\alpha(\mathbb{R}^N)}+\vertiii{u_2^{n}}_{H^\alpha(\mathbb{R}^N)}\leq B$ for all $n.$

\smallskip

(ii) there exists $\delta_i>0$ and $N_{\delta_i}$ such that $\|u_i^n\|_{L^{p_i}(\mathbb{R}^N) }^{p_i} \geq \delta_i$ for all $n\geq N_{\delta_i}.$

\smallskip

(iii) for any $\lambda>1$ and for all sufficiently large $n,$ the energy $e_i(u)$ satisfies the following scaling property
\[
e_i\left(\lambda u_i^{n}\right)<\lambda^2 e_i\left(u_i^{n}\right)\ \textrm{for}\ i=1,2.
\]
\end{lem}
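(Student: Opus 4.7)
The plan is to prove each assertion in sequence, with (ii) as the heart of the argument.

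For (i), the Young-type estimate derived at the end of the proof of Lemma~\ref{NegInfE}, namely
\[
E(v_1,v_2) \geq \tfrac{1-\varepsilon}{2}\bigl(\vertiii{v_1}_{H^\alpha(\mathbb{R}^N)}^2+\vertiii{v_2}_{H^\alpha(\mathbb{R}^N)}^2\bigr) - C,
\]
applied to $(u_1^n, u_2^n)$ (with $\|u_i^n\|_{L^2}$ bounded since it converges to $\sqrt{\tau_i}$) and combined with the convergence of $E(u_1^n,u_2^n)$, immediately yields the uniform $H^\alpha$ bound.

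For (ii), I argue by contradiction: suppose $\|u_1^n\|_{L^{p_1}}^{p_1} \to 0$ along a subsequence (the case of $u_2^n$ is analogous). By (i) and Lemma~\ref{sobInq}, $\{\|u_1^n\|_{L^{2_\alpha^\star}}\}$ is bounded; since $r_1+r_2 \in (2, 2+4\alpha/N)$ and $2+4\alpha/N < 2_\alpha^\star$, the Lebesgue interpolation
\[
\|u_1^n\|_{L^{r_1+r_2}} \leq \|u_1^n\|_{L^{p_1}}^{\theta} \|u_1^n\|_{L^{s}}^{1-\theta}
\]
(with $s=2$ when $r_1+r_2 \leq p_1$ and $s = 2_\alpha^\star$ otherwise, and $\theta \in (0,1)$) gives $\|u_1^n\|_{L^{r_1+r_2}} \to 0$, and H\"older then forces $\int F(u_1^n,u_2^n) \leq \beta\|u_1^n\|_{L^{r_1+r_2}}^{r_1}\|u_2^n\|_{L^{r_1+r_2}}^{r_2} \to 0$. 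Writing $E(u_1^n,u_2^n) = e_1(u_1^n)+e_2(u_2^n)-\int F$ and using $e_1(u_1^n) = \tfrac{1}{2}\|D^\alpha u_1^n\|^2 - o(1) \geq -o(1)$, the normalization $\tilde u_2^n := (\sqrt{\tau_2}/\|u_2^n\|_{L^2})u_2^n \in S_{\tau_2}$ (which perturbs $e_2$ by $o(1)$) yields $E_\tau \geq I_2(\tau_2) := \inf_{S_{\tau_2}} e_2$; finiteness $I_2(\tau_2) > -\infty$ follows from the same single-component argument used in Lemma~\ref{NegInfE}. For the reverse, the scaling argument of Lemma~\ref{NegInfE} applied to $e_1$ alone produces $u_1^\star \in S_{\tau_1}$ with $e_1(u_1^\star) =: -\delta < 0$; choosing any $\{v_2^m\} \subset S_{\tau_2}$ with $e_2(v_2^m) \to I_2(\tau_2)$ and using $F \geq 0$ gives
\[
E(u_1^\star,v_2^m) = e_1(u_1^\star) + e_2(v_2^m) - \int F(u_1^\star,v_2^m) \leq -\delta + e_2(v_2^m),
\]
so $E_\tau \leq -\delta + I_2(\tau_2) < I_2(\tau_2)$, contradicting the lower bound.

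For (iii), direct algebra gives $e_i(\lambda u) - \lambda^2 e_i(u) = -\tfrac{\mu_i}{p_i}(\lambda^{p_i}-\lambda^2)\|u\|_{L^{p_i}}^{p_i}$, which is strictly negative for $\lambda>1$ (since $p_i>2$) whenever $\|u\|_{L^{p_i}}^{p_i}>0$; applying this to $u_i^n$ with the lower bound from (ii) completes the proof. The main obstacle is the strict separation $E_\tau < I_2(\tau_2)$ in part (ii), which requires both a subcritical scaling producing a negative-energy state $u_1^\star \in S_{\tau_1}$ and the non-negativity $F \geq 0$ of the coupling, ensuring the coupled infimum lies strictly below the single-component infimum.
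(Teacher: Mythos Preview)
Your proposal is correct and follows essentially the same approach as the paper. The only cosmetic difference is in part~(ii): the paper compares $E_\tau$ directly with $\liminf_{n\to\infty} e_2(u_2^n)$ (using the same sequence $u_2^n$ as a test function after pairing it with a scaled $\phi\in S_{\tau_1}$ of negative $e_1$-energy), whereas you route the contradiction through the single-component infimum $I_2(\tau_2)=\inf_{S_{\tau_2}} e_2$; both arguments rely on the same two ingredients---the subcritical scaling that produces $e_1(u_1^\star)<0$ and the sign $F\geq 0$ allowing the coupling to be dropped---and are logically equivalent.
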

\noindent{\bf Proof.}\ For any minimizing sequence $\{(u_1^n,u_2^n)\}_{n\geq 1},$ it follows from \eqref{SobInq} that
\begin{equation}\label{Est1}
\int_{\mathbb{R}^N} |u_1^n|^{p_1}\ dx\leq C\vertiii{u_1^n}_{H^\alpha(\mathbb{R}^N)}^{N(p_1-2)/2\alpha}\leq C \vertiii{(u_1^n,u_2^n)}_{H^\alpha(\mathbb{R}^N)}^{N(p_1-2)/2\alpha},
\end{equation}
where $C=C(p_1, N, \tau_1, \alpha).$
Similar estimate holds for $\int_{\mathbb{R}^N} |u_2^n|^{p_2}\ dx.$
Using the H\"{o}lder inequality and the estimates for $\int_{\mathbb{R}^N} |u_1^n|^{p_1}\ dx$ and $\int_{\mathbb{R}^N} |u_2^n|^{p_2}\ dx$, we also have
\begin{equation}\label{Est3}
\begin{aligned}
\int_{\mathbb{R}^N} |u_1^n|^{r_1}|u_2^n|^{r_2}\ dx  \leq \left( \int_{\mathbb{R}^N} |u_1^n|^{r_1q}\right)^{1/q} \left( \int_{\mathbb{R}^N} |u_2^n|^{r_2q^\prime}\right)^{1/q^\prime}
\leq C \vertiii{(u_1^n,u_2^n)}_{H^\alpha(\mathbb{R}^N)}^{\mu_1+\mu_2 }
\end{aligned}
\end{equation}
where the exponents $\mu_1$ and $\mu_2$ are given by $\mu_1=N(r_1q-2)/2q\alpha$ and $\mu_2=N(r_2q^\prime-2)/2q^\prime \alpha$ with $1/q+1/q^\prime=1$ and $C=C(N, p_1, p_2, r_1, r_2, \tau, q, \alpha).$
Now put $F_n=(u_1^n,u_2^n)$ and observe that
\[
\frac{1}{2}\vertiii{F_n}_{H^\alpha(\mathbb{R}^N)}^2=E\left(F_n\right)+\gamma_1\|u_1^n\|_{L^{p_1}(\mathbb{R}^N)}^{p_1}
+\gamma_2\|u_2^n\|_{L^{p_2}(\mathbb{R}^N)}^{p_2}+\int_{\mathbb{R}^N}
F(F_n)\ dx+ \tau_3.
\]
Since the sequence $\{E\left(u_1^n, u_2^n\right) \}_{n\geq 1}$ is
bounded, we obtain that
\[
\frac{1}{2}\vertiii{F_n}_{H^\alpha(\mathbb{R}^N)}^2\leq C \left( \vertiii{F_n}_{H^\alpha}^{N(p_1-2)/2\alpha}+\vertiii{F_n}_{H^\alpha(\mathbb{R}^N)}^{N(p_2-2)/2\alpha}+\vertiii{F_n}_{H^\alpha(\mathbb{R}^N)}^{\mu_1+\mu_2 }\right)
\]
Since the exponent $N(p_i-2)/2\alpha$ belongs to $(0,2)$ for
any $p_i\in (2, 2+4\alpha/N)$ and $\mu_1+\mu_2<2,$ it follow that the
sequence $\{\left(u_1^n, u_2^n\right)\}_{n\geq 1}$ is bounded in $H^\alpha(\mathbb{R}^N)\times H^\alpha(\mathbb{R}^N).$

\smallskip

To prove that $L^{p_i}$-norms are bounded away from zero for large $n,$ we argue
by contradiction. If no such a positive number $\delta_1$ exists, then
$
\LimInf{n\to \infty}\int_{\mathbb{R}^N}|u_1^n|^{p_1}\ dx = 0.
$
Consequently, $\int_{\mathbb{R}^N}F(u_1^n, u_2^n)\ dx\to 0$ as $n\to \infty,$ and we have that
\begin{equation}\label{conlem}
E_\tau=\lim_{n\to \infty}E\left(u_1^n,u_2^n\right)\geq \liminf_{n\to \infty}e_2\left(u_2^n\right).
\end{equation}
On the other hand, select $\phi\geq 0$ with $\|\phi\|_{L^2(\mathbb{R}^N)}^2=\tau_1$ and for any number $\theta>0,$ put $u(x)=\theta^{1/2}\phi(\theta^{1/N} x).$
Then, for all $n\in \mathbb{N},$ one obtains that
\begin{equation}\label{SUBAt1}
E_\tau\leq e_2\left(u_2^n \right)+ \frac{1}{2}\theta^{2\alpha/N}\|(-\Delta)^{\alpha/2}\phi\|_{L^2(\mathbb{R}^N)}^2 -\theta^{(p_1-2)/2}\gamma_1\|\phi\|_{L^{p_1}(\mathbb{R}^N)}^{p_1}.
\end{equation}
Now by selecting $\theta$ sufficiently small, one can obtain
\[
\frac{1}{2}\theta^{2\alpha/N}\|(-\Delta)^{\alpha/2}\phi\|_{L^2(\mathbb{R}^N)}^2 -\theta^{(p_1-2)/2}\gamma_1\|\phi\|_{L^{p_1}(\mathbb{R}^N)}^{p_1} <0.
\]
It then follows from \eqref{SUBAt1} that
$
E_\tau< \LimInf{n\to \infty}e_2\left(u_2^n\right),
$
this contradicts the inequality obtained above in \eqref{conlem}. The
proof that $\|u_2^n\|_{L^{p_2}(\mathbb{R}^N)}^{p_2}\geq \delta_2$ follows the same argument.

\smallskip

To prove statement (iii), let $\lambda>1.$ By Lemma~\ref{BounMin}, since the $L^{p_i}$-norms are
bounded away from zero for all sufficiently large $n$, one has that
\begin{equation*}
\begin{aligned}
e_1\left(\lambda f_1^n \right)
& = \lambda^2 e_1\left(f_1^n\right)+\left(\lambda^2-\gamma_1\lambda^{p_1}\right)\|f_1^n\|_{L^{p_1}(\mathbb{R}^N)}^{p_1} \\
& \leq \lambda^2 e_1\left(f_1^n\right)+\left(\lambda^2-\gamma_1\lambda^{p_1}\right) \delta_2
< \lambda^2 e_1\left(f_1^n\right).
\end{aligned}
\end{equation*}
The proof of the scaling property $e_2\left(\lambda f_2^{n}\right)<\lambda^2 e_1\left(f_2^{n}\right)$ is similar. \hfill{$\Box$}

\smallskip

\begin{lem}  \label{Lvanish}
Let $N\geq 2$ and $2^\star_{\alpha}=2N/(N-2\alpha).$ Assume $\{w_n\}_{n\geq 1}$ be a bounded sequence of functions in $H^\alpha(\mathbb{R}^N).$
If there is some $R>0$ such that
\begin{equation}
\lim_{n\to \infty} \left( \sup_{y \in \mathbb{R}^N} \int_{y+B_{R}(0)}|w_n(x)|^{2}\ dx\right) = 0,
\label{vanishhypo}
\end{equation}
then one has
$
\Lim{n\to \infty}\|w_n\|_{L^q(\mathbb{R}^N)} = 0
$
for any $2<q<2^\star_{\alpha}.$
\end{lem}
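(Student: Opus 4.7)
The plan is to combine a finite-overlap covering of $\mathbb{R}^N$ by balls of radius $R$ with the fractional Gagliardo-Nirenberg inequality (the previous lemma) applied to localized pieces of each $w_n$: the vanishing hypothesis \eqref{vanishhypo} forces the local $L^2$ norms to be uniformly small, while the $H^\alpha$-bound on $\{w_n\}$ controls the complementary Sobolev factor. First I would pick a countable covering $\{B_R(y_i)\}_{i\in\mathbb{N}}$ with uniformly bounded overlap multiplicity $M$, together with cutoffs $\phi_i\in C_c^\infty(\mathbb{R}^N)$ satisfying $\phi_i\equiv 1$ on $B_R(y_i)$, $\mathrm{supp}\,\phi_i\subset B_{2R}(y_i)$, $|\phi_i|\leq 1$, and $|\nabla\phi_i|\leq C/R$.

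For $q$ in the range $[2+4\alpha/N,\,2_\alpha^\star)$ I would apply the fractional Gagliardo-Nirenberg inequality to $\phi_i w_n\in H^\alpha(\mathbb{R}^N)$:
\[
\|\phi_i w_n\|_{L^q(\mathbb{R}^N)}^q \;\leq\; C\, [\phi_i w_n]_{H^\alpha(\mathbb{R}^N)}^{\lambda q}\, \|\phi_i w_n\|_{L^2(\mathbb{R}^N)}^{(1-\lambda)q},
\]
with $\lambda=N(q-2)/(2\alpha q)\in(0,1)$, so that $\lambda q \geq 2$ precisely because $q\geq 2+4\alpha/N$. Since $\mathrm{supp}\,\phi_i\subset B_{2R}(y_i)$, I have $\|\phi_i w_n\|_{L^2}\leq \|w_n\|_{L^2(B_{2R}(y_i))}\leq C\sqrt{\varepsilon_n}$ with $\varepsilon_n=\sup_y \|w_n\|_{L^2(B_{2R}(y))}^2\to 0$. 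Summing the above estimates over $i$ and using $\|w_n\|_{L^q}^q\leq\sum_i \|\phi_i w_n\|_{L^q}^q$,
\[
\|w_n\|_{L^q(\mathbb{R}^N)}^q \;\leq\; C\,\varepsilon_n^{(1-\lambda)q/2}\,\sum_i[\phi_i w_n]_{H^\alpha(\mathbb{R}^N)}^{\lambda q}.
\]
Because $\lambda q\geq 2$, I control the remaining sum by $(\sup_j [\phi_j w_n]_{H^\alpha})^{\lambda q-2}\sum_i[\phi_i w_n]_{H^\alpha}^{2}\leq C\|w_n\|_{H^\alpha}^{\lambda q}$, using in the last step the key auxiliary estimate $\sum_i[\phi_i w_n]_{H^\alpha}^{2}\leq C\|w_n\|_{H^\alpha}^{2}$. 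The latter follows from the pointwise decomposition
\[
|\phi_i(x)w_n(x)-\phi_i(y)w_n(y)|^2 \leq 2\phi_i(x)^2|w_n(x)-w_n(y)|^2 + 2|\phi_i(x)-\phi_i(y)|^2|w_n(y)|^2,
\]
combined with $\sum_i\phi_i^2\leq M$ for the first piece, and the kernel bound $\int \min(4,C|z|^2/R^{2})|z|^{-N-2\alpha}\,dz\leq CR^{-2\alpha}$ for the second. Since $(1-\lambda)q>0$ and $\|w_n\|_{H^\alpha}$ is bounded, the right-hand side tends to zero.

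For $q\in(2,\, 2+4\alpha/N)$ I would fix any $q_0\in[2+4\alpha/N,\, 2_\alpha^\star)$ and use Hölder's interpolation $\|w_n\|_{L^q}\leq \|w_n\|_{L^2}^{1-\tau}\|w_n\|_{L^{q_0}}^{\tau}$ with $1/q=(1-\tau)/2+\tau/q_0$: the $L^{q_0}$ factor vanishes by the case just handled, the $L^2$ factor is bounded, so $\|w_n\|_{L^q}\to 0$ as well.

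The main obstacle is the summation bound $\sum_i[\phi_i w_n]_{H^\alpha}^2\leq C\|w_n\|_{H^\alpha}^2$: the Gagliardo seminorm is nonlocal, so the naive localization by $\phi_i$ leaves commutator-type errors that must be bounded carefully by splitting the inner integral at the scale $|z|=R$. Once this estimate is in hand, the remainder of the argument is standard interpolation and covering bookkeeping.
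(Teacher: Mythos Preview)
Your argument is correct, and it takes a genuinely different route from the paper's. The paper follows Lions' original scheme more literally: on each ball $B_R(y)$ it interpolates $\|w_n\|_{L^q(B_R(y))}$ between the local $L^2$ and $L^{2^\star_\alpha}$ norms by H\"older, extracts the small factor $\epsilon_n$ from the $L^2$ piece, and then sums over a finite-overlap covering, bounding the remaining $L^{2^\star_\alpha}$ contribution by the global Sobolev embedding. No cutoff functions are introduced and the Gagliardo seminorm never appears locally; the paper relies entirely on the fact that the $L^{2^\star_\alpha}$ norm is a local quantity.

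The trade-off is this. The paper's route is shorter, but after H\"older the local $L^{2^\star_\alpha}$ norm carries the exponent $(q-2)2^\star_\alpha/(2^\star_\alpha-2)<2^\star_\alpha$, so the claimed passage from $\sum_i\|w_n\|_{L^{2^\star_\alpha}(B_i)}^{\text{power}}$ to $\|w_n\|_{L^{2^\star_\alpha}(\mathbb{R}^N)}$ is not immediate---the exponent bookkeeping in the paper is loose at that step. Your approach avoids this entirely by localizing with cutoffs and transferring the summability burden to the seminorm via the key estimate $\sum_i[\phi_i w_n]_{H^\alpha}^2\le C\|w_n\|_{H^\alpha}^2$, which you correctly prove with the near/far kernel split. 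The price you pay is the extra commutator-type work and the case split at $q=2+4\alpha/N$ (needed so that $\lambda q\ge 2$ and the $\ell^2$-summability of the localized seminorms can be leveraged); the paper needs no such split. In the fractional setting your argument is the more robust of the two.
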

\noindent{\bf Proof.}\ This is a version of Lemma~I.1 of P. L. Lions \cite{Lio2}.
We provide a proof here for the sake of completeness.
Denote
\[
\epsilon_n=\sup_{y\in \mathbb{R}^N}\int_{y+B_R(0)}|w_n|^2\ dx,
\]
so that $\Lim{n\to \infty}\epsilon_n=0.$ Let $2<q<2^\star_{\alpha}=2N/(N-2\alpha).$ For every point $y\in \mathbb{R}^N$ and any number $R>0,$ using the H\"{o}lder inequality, for every $n$ we obtain that
\[
\|w_n\|_{L^q(y+B_R(0))} \leq \|w_n\|_{L^2(y+B_R(0))}^{\lambda}\|w_n\|_{L^{2^{\star}_{\alpha}}(y+B_R(0)) }^{1+\lambda},
\]
where $\lambda$ satisfies
$
\frac{\lambda q}{2}+\frac{(1+\lambda)q}{2^{\star}_{\alpha}}=1.
$
Taking $\lambda q=s,$
it then follows from the Sobolev inequality that
\begin{equation}\label{sumVan}
\int_{y+B_R(0)}|w_n|^q\ dx \leq C \epsilon_n^{s} \|w_n\|_{L^{2^{\star}_{\alpha}}(y+B_R(0)) }\vertiii{w_n}_{H^\alpha}^s\leq C\epsilon_n^{s} \|w_n\|_{L^{2^{\star}_{\alpha}}(y+B_R(0)) },
\end{equation}
where $s=2/q.$
Now cover the $n$-space $\mathbb{R}^N$ by $n$-balls of radius $R$ in such a way that each $x\in \mathbb{R}^N$ lies at most $N+1$ of these $n$-balls, then
by summing the inequality \eqref{sumVan} over all $n$-balls in the covering and making another use of the Sobolev inequality, we obtain that
\[
\int_{\mathbb{R}^N} |w_n|^q\ dx \leq (N+1) C\epsilon_n^{s} \|w_n\|_{L^{2^{\star}_{\alpha}}(\mathbb{R}^N) }\leq C \epsilon_n^{s},
\]
which gives the desired result. \hfill{$\Box$}

\smallskip

We require the following result of \cite{Guo} concerning the existence of
minimizers of the energy functional $e_i(f)$ associated with the standard
nonlinear fractional Schr\"{o}dinger equations.

\begin{lem}\label{ScalarL}
Suppose $N\geq 2, 0<\alpha<1,$ and $2<p_1, p_2<2+\frac{4\alpha}{N}$. Let $i\in \{1,2\}$ and let the
functional $e_i:H^\alpha(\mathbb{R}^N)\to \mathbb{C}$ be as defined in \eqref{singleEng}.
Then, for any $\tau_i>0,$ if a sequence $\{u_i^n\}_{n\geq 1}$ in $H^\alpha(\mathbb{R}^N)$ be such that $\|u_i^n\|_{L^2(\mathbb{R}^N)}\to \sqrt{\tau_i}$ as $n\to \infty$ and
\[
\lim_{n\to \infty}e_i(u_i^n)=\inf\{e_i(u):u\in H^\alpha(\mathbb{R}^N),\ \int_{\mathbb{R}^N}|u|^2\ dx=\tau_i \},
\]
then the sequence $\{u_i^n\}_{n\geq 1}$ is compact in $H^\alpha(\mathbb{R}^N)$ up to spatial translations and
the extraction of subsequence.
The limit function $\psi_{\tau_i}$ satisfies
\[
(-\Delta)^{\alpha}Q= \omega Q+\gamma |Q|^{p-2}Q,\ \mathrm{for\ some} \ \omega\in \mathbb{R}.
\]
\end{lem}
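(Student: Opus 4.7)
The plan is to apply P.~L.~Lions's concentration-compactness principle to the scalar problem, relying on the preliminary results already established in this section. First, I would verify that $I_{\tau_i} := \inf\{e_i(u) : u \in H^\alpha(\mathbb{R}^N),\ \|u\|_{L^2(\mathbb{R}^N)}^2 = \tau_i\}$ is finite and strictly negative, and that any minimizing sequence is bounded in $H^\alpha(\mathbb{R}^N)$. Finiteness from below and boundedness follow from the fractional Gagliardo--Nirenberg inequality combined with Young's inequality, exactly as in the coupled arguments of Lemma~\ref{NegInfE} and Lemma~\ref{BounMin}(i). Negativity comes from the rescaling $u^\lambda(x) = \lambda^{1/2} u(\lambda^{1/N} x)$, which preserves the $L^2$ constraint and gives $e_i(u^\lambda) = \tfrac{1}{2}\lambda^{2\alpha/N}\|D^\alpha u\|_{L^2(\mathbb{R}^N)}^2 - \gamma_i\lambda^{(p_i-2)/2}\|u\|_{L^{p_i}(\mathbb{R}^N)}^{p_i}$; the subcritical hypothesis $p_i < 2 + 4\alpha/N$ yields $(p_i-2)/2 < 2\alpha/N$, so the nonlinear term dominates for $\lambda$ small and $e_i(u^\lambda)<0$.

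Next, I would apply the concentration-compactness lemma to the densities $\tau_i^{-1}|u_i^n|^2$, producing the usual vanishing/dichotomy/compactness trichotomy. Vanishing is excluded via Lemma~\ref{Lvanish}: it would force $\|u_i^n\|_{L^{p_i}(\mathbb{R}^N)} \to 0$, hence $\liminf e_i(u_i^n) \geq 0 > I_{\tau_i}$. The substantive step is ruling out dichotomy, for which I need the strict subadditivity $I_{\tau_i} < I_s + I_{\tau_i - s}$ for every $s \in (0, \tau_i)$. This follows from the strict subhomogeneity $I_{\theta \tau} < \theta I_\tau$ for $\theta > 1$: evaluating $e_i$ on the pure dilation $v = \sqrt{\theta}\,u$ of a near-minimizer $u$ for $I_\tau$ gives $e_i(v) - \theta e_i(u) = -\gamma_i(\theta^{p_i/2} - \theta)\|u\|_{L^{p_i}(\mathbb{R}^N)}^{p_i}$, and since $p_i > 2$ while the condition $I_\tau < 0$ forces $\|u\|_{L^{p_i}} > 0$, this is strictly negative for $\theta>1$. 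Strict subadditivity is then extracted in the standard way from the two inequalities $I_s > (s/\tau_i)I_{\tau_i}$ and $I_{\tau_i-s} > ((\tau_i-s)/\tau_i)I_{\tau_i}$.

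The compactness alternative supplies translations $\{y_n\}\subset \mathbb{R}^N$ such that $u_i^n(\cdot + y_n)$ is tight in $L^2$. Extracting a weakly convergent subsequence in $H^\alpha(\mathbb{R}^N)$ with limit $\psi_{\tau_i}$, the local compact embedding of Lemma~\ref{sobInq} together with tightness gives $\|\psi_{\tau_i}\|_{L^2(\mathbb{R}^N)}^2 = \tau_i$, and interpolation between $L^2$ and $L^{2_\alpha^\star}$ (using the uniform $H^\alpha$ bound) upgrades the $L^2$ convergence to $L^{p_i}$ convergence on all of $\mathbb{R}^N$. Combined with $e_i(u_i^n)\to I_{\tau_i}$ this forces $\|D^\alpha u_i^n(\cdot+y_n)\|_{L^2}\to \|D^\alpha \psi_{\tau_i}\|_{L^2}$, so $\psi_{\tau_i}$ is a minimizer and the weak convergence in the Hilbert space $H^\alpha(\mathbb{R}^N)$ upgrades to strong convergence. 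The Euler--Lagrange condition at the constrained minimum finally produces the Lagrange multiplier $\omega\in\mathbb{R}$ and the stated equation. The main obstacle is the strict subadditivity, but the $L^2$-subcritical scaling handles it cleanly; every other step is standard in the concentration-compactness framework.
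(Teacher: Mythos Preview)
The paper does not actually prove this lemma: it is quoted as a known result from Guo and Huang \cite{Guo}, with no argument supplied. Your proposal is a correct, self-contained proof via the standard Cazenave--Lions/concentration-compactness scheme adapted to the fractional setting, and it is almost certainly the same argument that \cite{Guo} carries out. The key ingredients you identify---negativity of $I_{\tau_i}$ by the $L^2$-preserving rescaling, boundedness of minimizing sequences via fractional Gagliardo--Nirenberg, exclusion of vanishing through Lemma~\ref{Lvanish}, and exclusion of dichotomy through the strict subhomogeneity $I_{\theta\tau}<\theta I_\tau$ for $\theta>1$---are exactly the scalar analogues of the machinery the paper builds in Lemmas~\ref{NegInfE}, \ref{BounMin}, \ref{Lvanish}, and \ref{subadd} for the coupled problem, so your approach is fully consistent with the paper's framework. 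One minor remark: in the dichotomy step you will need a localization/commutator estimate of the type in Lemma~\ref{CEst} (or an equivalent device) to control the fractional kinetic energy under the cut-off splitting, since $D^\alpha$ is nonlocal; this is implicit in ``the usual trichotomy'' but is the one place where the fractional case requires genuine care beyond the classical $\alpha=1$ argument.
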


In the next few lemmas, we closely follow techniques of \cite{ABnls, [AB11], SBdcds} to prove strict inequalities involving
the minimization problem $(E,M_\sigma)$ as function of constraint variables.
These inequalities will play a key role later to excluding the possibility of
dichotomy for an energy-minimizing sequence while applying the concentration compactness principle.
\begin{lem}\label{SUBApos}
For any $\sigma, \tau\in \mathbb{R}^{+}\times \mathbb{R}^{+},$ one has $E_{\sigma+\tau}<E_{\sigma}+E_{\tau}.$
\end{lem}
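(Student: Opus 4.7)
The plan is to establish a quantitative scalar scaling inequality first, and then use it -- together with a convexity computation for vector rescalings -- to extract the strict subadditivity.

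First I would prove $E_{\theta\rho} \leq \theta E_\rho - c(\theta,\rho)$ for every $\theta > 1$ and $\rho \in \mathbb{R}^+\times\mathbb{R}^+$, with some $c(\theta,\rho) > 0$. Fix a minimizing sequence $\{(u_1^n,u_2^n)\}$ for $E_\rho$ provided by Lemma~\ref{BounMin}. The uniformly rescaled pair $(\sqrt{\theta}\,u_1^n,\sqrt{\theta}\,u_2^n)$ lies in $\mathcal{S}_{\theta\rho}$, and a direct computation gives
\[
E(\sqrt{\theta}\,u_1^n,\sqrt{\theta}\,u_2^n) - \theta E(u_1^n,u_2^n) = -\sum_{i=1}^{2}\gamma_i\bigl(\theta^{p_i/2}-\theta\bigr)\|u_i^n\|_{L^{p_i}}^{p_i} - \beta\bigl(\theta^{(r_1+r_2)/2}-\theta\bigr)\!\int\!|u_1^n|^{r_1}|u_2^n|^{r_2}\,dx.
\]
Because $p_i > 2$ and $r_1+r_2 > 2$, each bracketed factor is strictly positive for $\theta > 1$; combined with the uniform lower bounds $\|u_i^n\|_{L^{p_i}}^{p_i} \geq \delta_i > 0$ from Lemma~\ref{BounMin}(ii), the right-hand side is bounded above by $-c(\theta,\rho) < 0$, and letting $n\to\infty$ gives the claim.

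In the proportional case $\tau = c\sigma$ for some $c > 0$, the subadditivity follows immediately: $E_{\sigma+\tau} = E_{(1+c)\sigma} < (1+c)E_\sigma = E_\sigma + cE_\sigma$, and the comparison $cE_\sigma < E_{c\sigma} = E_\tau$ is obtained by applying the scaling inequality with $\theta = 1/c$ on base $c\sigma$ (if $c<1$) or with $\theta = c$ on base $\sigma$ (if $c\geq 1$). For general, non-proportional $\sigma,\tau$, set $\rho = \sigma+\tau$ and, starting from a minimizing sequence $\{(u_1^n,u_2^n)\}$ for $E_\rho$, consider the vector rescalings $v_i^n = \sqrt{\sigma_i/\rho_i}\,u_i^n \in \mathcal{S}_\sigma$ and $w_i^n = \sqrt{\tau_i/\rho_i}\,u_i^n \in \mathcal{S}_\tau$. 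Strict convexity of $x \mapsto x^{p_i/2}$ on $[0,1]$ (since $p_i/2 > 1$) gives $(\sigma_i/\rho_i)^{p_i/2} + (\tau_i/\rho_i)^{p_i/2} < 1$, and a weighted AM--GM argument (using $r_1/2 + r_2/2 > 1$) yields the analogous strict bound for the coupling term; summing these contributions one obtains
\[
E(v^n) + E(w^n) - E(u^n) = \sum_i \gamma_i(1-A_i)\|u_i^n\|_{L^{p_i}}^{p_i} + \beta(1-B)\!\int\!|u_1^n|^{r_1}|u_2^n|^{r_2}\,dx \geq c_0 > 0
\]
uniformly in $n$, where $A_i, B < 1$ strictly and the uniform gap uses Lemma~\ref{BounMin}(ii).

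The hard part will be converting this quantitative gap into $E_\rho < E_\sigma + E_\tau$: the rescalings $v^n$, $w^n$ are merely admissible (not minimizing) for the subproblems, so the naive bound $E_\sigma + E_\tau \leq E(v^n) + E(w^n)$ runs in the same direction as the gap and cannot be combined with it directly. I would close the argument by contradiction: assume equality in the weak subadditivity bound $E_\rho \leq E_\sigma + E_\tau$, where the weak bound itself is obtained by spatially separating near-minimizers of $E_\sigma$ and $E_\tau$ and invoking a Brezis--Lieb type decomposition (the Gagliardo cross-term from translates $b^n(\cdot - y_n)$ decaying like $|y_n|^{-(N+2\alpha)}$). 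Pairing the quantitative gap $c_0 > 0$ with this assumed equality then produces the desired contradiction, forcing $E_{\sigma+\tau} < E_\sigma + E_\tau$.
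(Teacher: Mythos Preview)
Your scaling inequality $E_{\theta\rho}\leq \theta E_\rho - c(\theta,\rho)$ for $\theta>1$ is correct, and the proportional case is essentially fine (though note: for $c\geq 1$ your stated application yields $E_{c\sigma}<cE_\sigma$, the wrong direction; repair this by scaling with base $\tau=c\sigma$ and factor $1+1/c$ instead, or simply swap the roles of $\sigma$ and $\tau$). The substantive problem is in the non-proportional case.

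There you rescale a minimizing sequence $(u^n)$ for $E_\rho$ \emph{downward} to produce admissible $v^n\in\mathcal S_\sigma$ and $w^n\in\mathcal S_\tau$, and your convexity computation correctly gives
\[
E(v^n)+E(w^n)\ \geq\ E(u^n)+c_0\ \longrightarrow\ E_\rho+c_0.
\]
But, as you yourself observe, this points the same way as the trivial bounds $E_\sigma\leq E(v^n)$ and $E_\tau\leq E(w^n)$: both are \emph{lower} bounds on the same quantity $E(v^n)+E(w^n)$, and two inequalities of the same orientation cannot be combined to compare $E_\sigma+E_\tau$ with $E_\rho$. Your proposed rescue---prove the weak bound $E_\rho\leq E_\sigma+E_\tau$ by spatially separating near-minimizers, assume equality, and invoke $c_0>0$---does not produce a contradiction. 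Under the assumed equality the displayed inequality becomes $E(v^n)+E(w^n)\geq E_\sigma+E_\tau+c_0$, which merely says that the particular rescalings $v^n,w^n$ fail to minimize their respective subproblems by at least $c_0$; this is entirely consistent with everything else and nothing contradictory follows.

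The difficulty is structural: to bound $E_{\sigma+\tau}$ from above you must build a test pair \emph{in} $\mathcal S_{\sigma+\tau}$, not test pairs in $\mathcal S_\sigma$ and $\mathcal S_\tau$. The paper therefore runs the scaling in the opposite direction: it starts from minimizing sequences for $E_\sigma$ and for $E_\tau$ and scales one coordinate of one of them \emph{up} (combining with a translate in the other coordinate) to manufacture a competitor in $\mathcal S_{\sigma+\tau}$, so that the resulting bound $E_{\sigma+\tau}\leq E(\text{competitor})$ has the useful orientation. Because the two coordinates carry independent constraints, this forces a case analysis comparing certain normalized partial energies (the $A_i,B_i$ in the paper) to decide which sequence, and which coordinate, to rescale; Lemma~\ref{BounMin}(iii) supplies the strict gain in the balanced case $A_1=A_2$. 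Your downward rescaling does not substitute for this step.
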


\noindent{\bf Proof.}\ \ For any $\sigma, \tau\in \mathbb{R}^{+}\times \mathbb{R}^{+},$ let $\{(u_{1,i}^n, u_{2,i}^n)\}_{n\geq 1}$ be any sequence of functions
in $H^\alpha(\mathbb{R}^N)\times H^\alpha(\mathbb{R}^N)$ satisfying the conditions
\[
\begin{aligned}
 & \lim_{n\to \infty}\|u_{j,1}^n\|_{L^2(\mathbb{R}^N)}=\sqrt{\sigma_j},\ \lim_{n\to \infty}\|u_{j,2}^n\|_{L^2(\mathbb{R}^N)}=\sqrt{\tau_j}\ \textrm{for}\ j=1,2,\\
   & \lim_{n\to \infty}E\left(u_{1,1}^n, u_{2,1}^n\right)=E_\sigma, \  \textrm{and}\ \lim_{n\to \infty}E\left(u_{1,2}^n, u_{2,2}^n\right)=E_\tau.
\end{aligned}
\]
By passing to suitable subsequences, one may assume that the following values exists
\[
\begin{aligned}
& A_1=\frac{1}{\sigma_1}\lim_{n\to \infty} \left(e_1\left(u_{1,1}^n\right)-\int_{\mathbb{R}^N} F\left(u_{1,1}^n,u_{2,1}^n \right)\ dx\right),
B_1=\frac{1}{\sigma_2}\lim_{n\to \infty}e_2 \left(u_{2,1}^n\right),\\
& A_2=\frac{1}{\tau_1}\lim_{n\to \infty} \left(e_1\left(u_{1,2}^n\right)- \int_{\mathbb{R}^N}F\left(u_{1,2}^n,u_{2,2}^n \right)\ dx \right),\
B_2=\frac{1}{\tau_2}\lim_{n\to \infty}e_2 \left(u_{2,2}^n\right).
\end{aligned}
\]
We first consider the case that $A_1<A_2.$ Without
loss of generality, we may assume that $u_{1,i}^n$ and $u_{2,i}^n$ are non-negative.
By a density argument, we may also suppose that $u_{1,i}^n$ and $u_{2,i}^n$ have compact support. We denote
$\widetilde{u}_{2,1}^n(\cdot)=u_{2,1}^n(\cdot-b_{2,1}\kappa),$ where $\kappa$ is some unit vector in $\mathbb{R}^N.$
Choose $b_{2,1}$ such that the supports of $\widetilde{u}_{2,1}$ and $u_{2,2}^n$ are disjoint and let
$u_2^{n}=\widetilde{u}_{2,1}^n+u_{2,2}^n$ in $\mathbb{R}^N.$ Then,
we have that $\Lim{n\to \infty}\|u_2^{n}\|_{L^2(\mathbb{R}^N)} = \sqrt{\sigma_2+\tau_2}.$
Let us denote $q_{1,1}=1+\frac{\tau_1}{\sigma_1}$ and $q_{2,2}=1+\frac{\tau_2}{\sigma_2}.$ Then it is obvious that
\begin{equation}\label{subC1}
E_{\sigma+\tau}\leq \lim_{n\to \infty}E\left((q_{1,1})^{1/2}u_{1,1}^n,u_2^{n} \right).
\end{equation}
Introducing the notation
$
J(u,v)=e_1(u)-\int_{\mathbb{R}^N}F(u,v)\ dx
$
for $u,v\in H^\alpha(\mathbb{R}^N).$
We now make use of the fact that $q_{1,1}>1$ to obtain
\begin{equation}\label{subC2}
\begin{aligned}
\lim_{n\to \infty}& J\left((q_{1,1})^{1/2}u_{1,1}^n, u_2^{n} \right) \leq \lim_{n\to \infty}J\left( (q_{1,1})^{1/2}u_{1,1}^n,\widetilde{u}_{2,1}^n \right)\\
& = q_{1,1} \lim_{n\to \infty}\int_{\mathbb{R}^N}\left(\frac{1}{2}|D^\alpha u_{1,1}^n |^2-q_{1,1} \gamma_1|u_{1,1}^n|^{p_1}-F\left( u_{1,1}^n,\widetilde{u}_{2,1}^n \right) \right)\ dx\\
& \leq q_{1,1}\lim_{n\to \infty} J\left(u_{1,1}^n, \widetilde{u}_{2,1}^n\right)=q_{1,1} \sigma_1A_1=\sigma_1A_1+\tau_1A_2-\delta,
\end{aligned}
\end{equation}
where $\delta = \tau_1(A_2-A_1).$ Since $A_1<A_2,$ it is obvious that $\delta>0.$ Applying \eqref{subC2} into \eqref{subC1}, one can
easily deduce that
\begin{equation*}
E_{\sigma+\tau} \leq \lim_{n\to \infty} \left(E\left( u_{1,1}^n, \widetilde{u}_{2,1}^n\right)+E\left( u_{1,2}^n,u_{2,2}^n\right) \right)-\delta
 < E_\sigma+E_\tau.
\end{equation*}
The proof in the case $A_1>A_2$ follows the same argument except that we swap the indices
and so will
not be repeated here.
Next, suppose that $A_{1}=A_{2}$ and $B_{1}\leq B_{2}.$ Invoking Lemma~\ref{BounMin}(iii), there exists $\delta>0$ such that
\begin{equation*}
\begin{aligned}
E_{\sigma+\tau}& \leq E\left((q_{1,1})^{1/2}u_{1,1}^n, (q_{2,2})^{1/2}u_{2,1}^n \right)\\
&\leq q_{2,2} e_2\left(u_{2,1}^n\right)
+J\left((q_{1,1})^{1/2}u_{1,1}^n,(q_{2,2})^{1/2}f_{2,1}^n \right)-\delta \\
& \leq  q_{2,2} e_2\left(u_{2,1}^n\right)+q_{1,1} J\left(u_{1,1}^n,u_{2,1}^n \right)-\delta\\
& = E\left(u_{1,1}^n,u_{2,1}^n \right) + \frac{\tau_2}{\sigma_2} e_2\left(u_{2,1}^n\right)+\frac{\tau_1}{\sigma_1} J\left(u_{1,1}^n,u_{2,1}^n \right)-\delta.
\end{aligned}
\end{equation*}
Passing the limit as $n\to \infty$ on both sides of the preceding inequality and making use of the facts
$A_1=A_2$ and $B_1\leq B_2,$ one obtains that
\begin{equation*}
\begin{aligned}
E_{\sigma+\tau}& \leq E_\sigma+\frac{\tau_2}{\sigma_2}(\sigma_2B_1)+\frac{\tau_1}{\sigma_1}(\sigma_1A_1) -\delta \\
& \leq  E_\sigma +\tau_2B_2+\tau_1A_2-\delta
 < E_\sigma+E_\tau.
\end{aligned}
\end{equation*}
The proof in the case $A_1=A_2$ and $B_1\geq B_2$ follows a similar argument. \hfill{$\Box$}

\begin{lem}\label{SUBApos2}
For any $\tau\in \mathbb{R}^{+}\times \mathbb{R}^{+}$ and $\sigma=(0,\sigma_2)$ with $\sigma_2>0,$ one has
\begin{equation}\label{s1zero}
E_{\sigma+\tau}<E_\sigma+E_\tau.
\end{equation}
\end{lem}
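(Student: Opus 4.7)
The plan is to mirror the scaling strategy used in the second case of Lemma~\ref{SUBApos} (the case $A_1=A_2$, $B_1\leq B_2$), where strict inequality was produced by Lemma~\ref{BounMin}(iii). The subtle point is that $\sigma_1=0$ forces any minimizing sequence for $E_\sigma$ to have vanishing first component, so the construction of Lemma~\ref{SUBApos} cannot be applied verbatim. I would therefore anchor the construction on a minimizing sequence $\{(v_1^n,v_2^n)\}$ for $E_\tau$, which by Lemma~\ref{BounMin} is bounded in $H^\alpha(\mathbb{R}^N)\times H^\alpha(\mathbb{R}^N)$ with $\|v_2^n\|_{L^{p_2}(\mathbb{R}^N)}^{p_2}\geq \delta_2>0$ for $n$ large, and add a far-translated copy of the scalar minimizer $\psi$ of $E_\sigma=\inf\{e_2(u):\|u\|_{L^2(\mathbb{R}^N)}^2=\sigma_2\}$ supplied by Lemma~\ref{ScalarL}.

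Setting $\tilde\psi^n(\cdot)=\psi(\cdot-b_n\kappa)$ for a unit vector $\kappa$ and $|b_n|\to\infty$, and taking compactly supported representatives by density, the supports of $\tilde\psi^n$ and $(v_1^n,v_2^n)$ become disjoint for large $n$. For small $|t|$, I would work with the one-parameter family
\[
U^n(t)=\Bigl(v_1^n,\;\sqrt{1+t}\,v_2^n+\sqrt{1-t\tau_2/\sigma_2}\,\tilde\psi^n\Bigr),
\]
which (after a negligible renormalization) lies in $\mathcal{S}_{\sigma+\tau}$. The disjointness of supports decouples the Gagliardo seminorm, the $L^{p_2}$-norm, and the integral $\beta\int|v_1^n|^{r_1}|\,\cdot\,|^{r_2}\,dx$ with $\tilde\psi^n$, yielding $E(U^n(0))\to E_\tau+E_\sigma$, hence $E_{\sigma+\tau}\leq E_\tau+E_\sigma$.

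To promote this to a strict inequality, I would Taylor-expand $g_n(t):=E(U^n(t))=g_n(0)+\mathcal{L}_n t+\mathcal{Q}_n t^2+O(t^3)$. The Euler--Lagrange identity for $\psi$, namely $\tfrac12\|D^\alpha\psi\|_{L^2(\mathbb{R}^N)}^2-\tfrac{p_2}{2}\gamma_2\|\psi\|_{L^{p_2}(\mathbb{R}^N)}^{p_2}=\tfrac12\omega_\psi\sigma_2$ with $\omega_\psi<0$ (a consequence of $p_2<2+4\alpha/N$), together with the analogous approximate identity for the sequence $(v_1^n,v_2^n)$, yields $\mathcal{L}_n\to\tfrac{\tau_2}{2}(\lambda_2-\omega_\psi)$ as $n\to\infty$. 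When this limit is nonzero, picking $t_n$ with opposite sign and $|t_n|$ small gives $g_n(t_n)<g_n(0)$ uniformly. When the limit vanishes, Lemma~\ref{BounMin}(iii) together with Lemma~\ref{BounMin}(ii) produces a strictly negative scaling contribution
\[
-\frac{p_2(p_2-2)}{8}\gamma_2\bigl[\|v_2^n\|_{L^{p_2}(\mathbb{R}^N)}^{p_2}+(\tau_2/\sigma_2)^2\|\psi\|_{L^{p_2}(\mathbb{R}^N)}^{p_2}\bigr]
\]
inside $\mathcal{Q}_n$, bounded above by $-c<0$ uniformly in $n$.

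The main obstacle is the interaction correction $-\tfrac{r_2(r_2-2)}{8}\beta\int|v_1^n|^{r_1}|v_2^n|^{r_2}\,dx$ in $\mathcal{Q}_n$, which has the wrong sign when $r_2<2$ and must be dominated by the scaling contribution. I would handle this by enriching the family $U^n(t)$: for instance, by allowing $\tilde\psi^n$ a controlled overlap with $v_1^n$ so that the gain $-\beta\int|v_1^n|^{r_1}|\tilde\psi^n|^{r_2}\,dx<0$ delivers an extra first-order decrement, or by introducing a compensating second scaling parameter on the $v_1^n$ slot to widen the negative margin. In either case, the outcome is an $n$-uniform bound $g_n(t_n)\leq E_\tau+E_\sigma-c$ with $c>0$, and passing to the limit $n\to\infty$ gives $E_{\sigma+\tau}<E_\sigma+E_\tau$ as claimed.
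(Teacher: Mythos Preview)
Your approach differs substantially from the paper's and carries two gaps that are not resolved in the sketch.

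First, the claim that $\mathcal{L}_n\to\tfrac{\tau_2}{2}(\lambda_2-\omega_\psi)$ presupposes an ``approximate Euler--Lagrange identity'' for the minimizing sequence $(v_1^n,v_2^n)$. A general minimizing sequence is not almost-critical; without invoking something like Ekeland's variational principle (which the paper does not use and which would change the character of the argument) there is no reason the quantity $\tfrac12\|D^\alpha v_2^n\|_{L^2}^2-\tfrac{p_2}{2}\gamma_2\|v_2^n\|_{L^{p_2}}^{p_2}-\tfrac{r_2}{2}\beta\int|v_1^n|^{r_1}|v_2^n|^{r_2}$ should converge to $\tfrac12\lambda_2\tau_2$ for any $\lambda_2$. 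Second, you yourself flag that the quadratic term $\mathcal{Q}_n$ contains the contribution $-\tfrac{r_2(r_2-2)}{8}\beta\int|v_1^n|^{r_1}|v_2^n|^{r_2}$, which is positive when $r_2<2$; the proposed remedies (controlled overlap of $\tilde\psi^n$ with $v_1^n$, or an extra scaling parameter on $v_1^n$) are not worked out, and it is not clear either produces a uniform negative margin without creating new error terms of comparable size.

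The paper avoids both difficulties with a much simpler device. It never forms a sum $v_2^n+\tilde\psi^n$; instead it takes minimizing sequences $\{(u_{1,1}^n,u_{2,1}^n)\}$ for $E_\sigma$ and $\{(u_{1,2}^n,u_{2,2}^n)\}$ for $E_\tau$, fixes the first slot as $u_{1,2}^n$, and compares the two ``per-mass'' quantities
\[
D_1=\frac{1}{\sigma_2}\lim_{n\to\infty}\Bigl(e_2(u_{2,1}^n)-\int F(u_{1,2}^n,u_{2,1}^n)\Bigr),\qquad
D_2=\frac{1}{\tau_2}\lim_{n\to\infty}\Bigl(e_2(u_{2,2}^n)-\int F(u_{1,2}^n,u_{2,2}^n)\Bigr).
\]
If $D_1<D_2$ one tests with $\bigl(u_{1,2}^n,(1+\tau_2/\sigma_2)^{1/2}u_{2,1}^n\bigr)$; if $D_1>D_2$ with $\bigl(u_{1,2}^n,(1+\sigma_2/\tau_2)^{1/2}u_{2,2}^n\bigr)$; in either case the gap $|D_1-D_2|$ supplies the strict inequality directly, with no second-order analysis. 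If $D_1=D_2$, Lemma~\ref{BounMin}(iii) applied to $e_2$ gives the strict drop. Because only one function is multiplicatively scaled and the coupling integral $\int|u_{1,2}^n|^{r_1}|\,\cdot\,|^{r_2}$ scales like $(q)^{r_2/2}\geq q$ times itself only in the favorable direction (it is subtracted and $q>1$), the sign of $r_2-2$ never enters. I recommend reworking your argument along these lines.
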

\noindent{\bf Proof.}\ \ Suppose $i\in \{1,2\}$ and let $\{(u_{1,i}^n, u_{2,i}^n)\}_{n\geq 1}$ be sequence of functions
in $H^\alpha(\mathbb{R}^N)\times H^\alpha(\mathbb{R}^N)$ satisfying the conditions
\[
\begin{aligned}
 & \lim_{n\to \infty}\|u_{1,1}^n\|_{L^2(\mathbb{R}^N)}=0,\ \lim_{n\to \infty}\|u_{2,1}^n\|_{L^2(\mathbb{R}^N)}=\sqrt{\sigma_2},\ \lim_{n\to \infty}E\left(u_{1,1}^n, u_{2,1}^n\right)=E_\sigma, \\
 & \lim_{n\to \infty}\|u_{j,2}^n\|_{L^2(\mathbb{R}^N)}=\sqrt{\tau_j}\ \textrm{for}\ j=1,2,
    \  \textrm{and}\ \lim_{n\to \infty}E\left(u_{1,2}^n, u_{2,2}^n\right)=E_\tau.
\end{aligned}
\]
We look for sequence of functions $(u_1^n, u_2^n)$ in $H^\alpha(\mathbb{R}^N)\times H^\alpha(\mathbb{R}^N)$ satisfying
$\|u_1^n\|_{L^2(\mathbb{R}^N)}\to \sqrt{\tau_1},$ $\|u_2^n\|_{L^2(\mathbb{R}^N)}\to \sqrt{\sigma_2+\tau_2},$ $E(u_1^n, u_2^n)\to E_{\sigma+\tau}$ as $n\to \infty,$ and such that \eqref{s1zero} holds.
As before, one can pass to a subsequence
if necessary and consider the values
\[
\begin{aligned}
& D_1=\frac{1}{\sigma_2}\lim_{n\to \infty}  \left( e_2\left(u_{2,1}^n\right)-\int_{\mathbb{R}^N}F\left(u_{1,2}^n, u_{2,1}^n\right)\ dx \right)\ \textrm{and} \\
& D_2=\frac{1}{\tau_2}\lim_{n\to \infty}  \left( e_2\left(u_{2,2}^n\right)-\int_{\mathbb{R}^N} F\left(u_{1,2}^n,u_{2,2}^n\right)\ dx \right).
\end{aligned}
\]
Assume first that $D_1<D_2.$ With $q_{2,2}$ defined as above, it follows that
\begin{equation*}
\begin{aligned}
E_{\sigma+\tau}& \leq E\left(u_{1,2}^n, (q_{2,2})^{1/2}u_{2,1}^n \right)
= e_1\left(u_{1,2}^n\right)\\
& +q_{2,2}\int_{\mathbb{R}^N}\left(\frac{1}{2}|D^\alpha u_{2,1}^n|^2-q_{2,2} \gamma_2|u_{2,1}^n|^{p_2}
-q_{2,2}F\left(u_{1,2}^n, u_{2,1}^n\right) \right)\ dx\\
& \leq e_1\left(u_{1,2}^n\right)+ q_{2,2}e_2\left(u_{2,1}^n \right)-q_{2,2}\int_{\mathbb{R}^N} F\left(u_{1,2}^n,f_{2,1}^n \right)\ dx.
\end{aligned}
\end{equation*}
Put $\delta = \tau_2(D_2-D_1).$ Since $D_1<D_2,$ we have that $\delta>0.$
Passing the limit as $n\to \infty$ in the preceding inequality and using the definition of $q_{2,2},$ we obtain that
\begin{equation*}
\begin{aligned}
E_{\sigma+\tau}& \leq \lim_{n\to \infty} E\left(u_{1,1}^n, u_{2,1}^n\right)+\lim_{n\to \infty} e_1\left(u_{1,2}^n\right)+\frac{\tau_2}{\sigma_2}(\sigma_2D_1)\\
& =E_\sigma+\lim_{n\to \infty} e_1\left(u_{1,2}^n\right) +\tau_2D_2-\delta \\
& =E_\sigma+\lim_{n\to \infty}E\left(u_{1,2}^n, u_{2,2}^n\right)-\delta < E_\sigma+E_\tau.
\end{aligned}
\end{equation*}
Next consider the case that $D_1>D_2.$ With $p_{2,2}=1+\frac{\sigma_2}{\tau_2},$ we have that
\begin{equation*}
\begin{aligned}
E_{\sigma+\tau}& \leq E\left(u_{1,2}^n, (p_{2,2})^{1/2}u_{2,2}^n\right)
=e_1\left(u_{1,2}^n \right)\\
& + p_{2,2}\int_{\mathbb{R}^N}\left(\frac{1}{2}|D^\alpha u_{2,2}^n|^2-p_{2,2} \gamma_2|u_{2,2}^n|^{p_2}
-p_{2,2}F\left(u_{1,2}^n, u_{2,2}^n\right) \right)\ dx\\
& \leq e_1\left(u_{1,2}^n \right)+p_{2,2} e_2\left(u_{2,2}^n \right)-p_{2,2}\int_{\mathbb{R}^N} F\left(u_{1,2}^n,u_{2,2}^n \right)\ dx.
\end{aligned}
\end{equation*}
Put $\delta=\sigma_2(D_1-D_2).$ Then $\delta>0.$ Passing the limit as $n\to \infty$ in the
preceding inequality and using the definition of $p_{2,2},$ it follows that
\begin{equation*}
\begin{aligned}
E_{\sigma+\tau}& \leq \lim_{n\to \infty}E(u_{1,2}^n,u_{2,2}^n )+\frac{\sigma_2}{\tau_2}(\tau_2D_2)= E_{\tau}+\sigma_2D_1-\delta \\
& \leq  E_{\tau} + \lim_{n\to \infty}E\left(u_{1,1}^n, u_{2,1}^n\right)-\delta< E_{\tau}+E_{\sigma}.
\end{aligned}
\end{equation*}
Finally, suppose that $D_1=D_2.$ Let $q_{2,2}$ be as defined in the proof of Lemma~\ref{SUBApos} and $u_2^n=(q_{2,2})^{1/2}u_{2,1}^n.$
Then, using Lemma~\ref{BounMin}(iii), one can find $\delta>0$ such that
\begin{equation*}
E_{\sigma+\tau}\leq E\left(u_{1,2}^n, u_2^n \right)\leq e_1\left(u_{1,2}^n \right)+q_{2,2}e_2\left(u_{2,1}^n \right)-q_{2,2}\int_{\mathbb{R}^N} F\left(u_{1,2}^n,u_{2,1}^n\right)\ dx-\delta.
\end{equation*}
Since $q_{2,2}=1+\frac{\tau_2}{\sigma_2},$ one can pass limit as $n\to \infty$ on both sides of the last inequality and
make use of the fact $D_1=D_2$ to obtain
\begin{equation*}
\begin{aligned}
E_{\sigma+\tau}& \leq E_\sigma+\lim_{n\to \infty}e_1\left(u_{1,2}^n\right)+\frac{\tau_2}{\sigma_2}(\sigma_2D_1)-\delta \\
& = E_\sigma+\lim_{n\to \infty}e_1\left(u_{1,2}^n\right)+\tau_2D_2-\delta \\
& \leq E_\sigma+\lim_{n\to \infty}E\left(u_{1,2}^n,u_{2,2}^n \right)-\delta<E_\sigma+E_\tau.
\end{aligned}
\end{equation*}
This completes the proof of the inequality \eqref{s1zero} in all
three possible cases according to values of $D_1$ and $D_2.$ \hfill{$\Box$}

\smallskip

One can follow the same argument as in the proof of \eqref{s1zero} to prove the following version of subadditivity inequality.
\begin{lem}\label{SUBApos3}
For any $\sigma\in \mathbb{R}^{+}\times \mathbb{R}^{+}$ and $\tau\in \mathbb{R}^{+}\times \{0\},$ one has
$
E_{\sigma+\tau}<E_\sigma+E_\tau.
$
\end{lem}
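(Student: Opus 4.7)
The plan is to mirror the proof of Lemma~\ref{SUBApos2} with the roles of the two vector components interchanged. In Lemma~\ref{SUBApos2} the vanishing entry was $\sigma_1=0$, and the whole argument was built around redistributing second-component mass; here the vanishing entry is $\tau_2=0$, so the analogous construction must redistribute first-component mass.

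First I would select minimizing sequences $\{(u_{1,1}^n,u_{2,1}^n)\}_{n\geq 1}$ for $(E,\mathcal{S}_\sigma)$ and $\{(u_{1,2}^n,u_{2,2}^n)\}_{n\geq 1}$ for $(E,\mathcal{S}_\tau)$. By Lemma~\ref{BounMin}(i) both are bounded in $H^\alpha(\mathbb{R}^N)\times H^\alpha(\mathbb{R}^N)$, and since $\|u_{2,2}^n\|_{L^2(\mathbb{R}^N)}\to 0$ the fractional Gagliardo--Nirenberg inequality together with H\"{o}lder's inequality produce $\|u_{2,2}^n\|_{L^{p_2}(\mathbb{R}^N)}^{p_2}\to 0$ and $\int_{\mathbb{R}^N}F(u_{1,2}^n,u_{2,2}^n)\,dx\to 0$, so that $e_2(u_{2,2}^n)\to 0$ and $e_1(u_{1,2}^n)\to E_\tau$. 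Passing to subsequences, I introduce
\[
D_1=\frac{1}{\sigma_1}\lim_{n\to\infty}\Bigl(e_1(u_{1,1}^n)-\int_{\mathbb{R}^N}F(u_{1,1}^n,u_{2,1}^n)\,dx\Bigr),\qquad D_2=\frac{1}{\tau_1}\lim_{n\to\infty}e_1(u_{1,2}^n),
\]
so that $E_\tau=\tau_1D_2$, and split the argument into the three cases $D_1<D_2$, $D_1>D_2$, and $D_1=D_2$ exactly as in Lemma~\ref{SUBApos2}.

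For $D_1<D_2$ I would test $E_{\sigma+\tau}$ against $((q_{1,1})^{1/2}u_{1,1}^n,u_{2,1}^n)$ with $q_{1,1}=1+\tau_1/\sigma_1>1$; the $L^2$-norms converge to $\sqrt{\sigma_1+\tau_1}$ and $\sqrt{\sigma_2}$, so the test function is admissible in the limit. Expanding $E$ and using $q_{1,1}^{p_1/2}>q_{1,1}$ (from $p_1>2$) together with $F\geq 0$ leads to
\[
E\bigl((q_{1,1})^{1/2}u_{1,1}^n,u_{2,1}^n\bigr)\leq q_{1,1}\Bigl(e_1(u_{1,1}^n)-\int_{\mathbb{R}^N}F\,dx\Bigr)+e_2(u_{2,1}^n),
\]
which in the limit becomes $E_{\sigma+\tau}\leq E_\sigma+\tau_1D_1<E_\sigma+\tau_1D_2=E_\sigma+E_\tau$. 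The case $D_1>D_2$ is symmetric: I would take $p_{1,1}=1+\sigma_1/\tau_1$ and the test function $((p_{1,1})^{1/2}\widetilde{u}_{1,2}^n,u_{2,1}^n)$, where $\widetilde{u}_{1,2}^n$ is a translation of $u_{1,2}^n$ whose support is pushed far from that of $u_{2,1}^n$ so that the cross $F$-integral vanishes in the limit; the analogous expansion then gives $E_{\sigma+\tau}\leq E_\tau+\sigma_1D_2<E_\tau+\sigma_1D_1\leq E_\sigma+E_\tau$.

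The equality case $D_1=D_2$ is where I expect the main technical content to lie, since the two constructions above only deliver a non-strict inequality in this borderline situation. Strict inequality must then come from Lemma~\ref{BounMin}(iii): the scaling $u_{1,1}^n\mapsto(q_{1,1})^{1/2}u_{1,1}^n$ produces the excess term $\gamma_1(q_{1,1}^{p_1/2}-q_{1,1})\|u_{1,1}^n\|_{L^{p_1}(\mathbb{R}^N)}^{p_1}$, and the uniform lower bound $\|u_{1,1}^n\|_{L^{p_1}(\mathbb{R}^N)}^{p_1}\geq\delta_1>0$ from Lemma~\ref{BounMin}(ii) secures a gap $\delta>0$ independent of $n$. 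With this gap in hand, the argument is identical, mutatis mutandis, to the $D_1=D_2$ subcase of Lemma~\ref{SUBApos2}, and yields $E_{\sigma+\tau}<E_\sigma+E_\tau$.
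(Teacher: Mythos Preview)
Your proposal is correct and follows precisely the route the paper indicates: the paper itself gives no separate proof of this lemma, stating only that ``one can follow the same argument as in the proof of \eqref{s1zero},'' and your component-swap of the Lemma~\ref{SUBApos2} argument (redistributing first-component mass, defining $D_1,D_2$ via $e_1$ rather than $e_2$, and invoking Lemma~\ref{BounMin}(ii)--(iii) for the equality case) is exactly that. One minor bookkeeping slip: in the $D_1>D_2$ chain you dropped the term $\lim_{n}e_2(u_{2,1}^n)$ on both sides, so the displayed inequalities should read $E_{\sigma+\tau}\leq E_\tau+\sigma_1D_2+\lim_n e_2(u_{2,1}^n)<E_\tau+\sigma_1D_1+\lim_n e_2(u_{2,1}^n)=E_\tau+E_\sigma$; this does not affect the argument.
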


We are now able to establish the following version of the subadditivity condition.
\begin{lem} \label{subadd}
For all $\sigma, \tau\in (\mathbb{R}^{+}\times \mathbb{R}^{+}) \cup \{\mathbf{0}\}$ with
$\sigma+\tau=\beta \in \mathbb{R}^{+}\times \mathbb{R}^{+}$ and $\sigma, \tau \neq \{\mathbf{0}\},$ one has
\begin{equation}\label{SUBA}
E_\beta<E_\sigma +E_\tau.
\end{equation}
\end{lem}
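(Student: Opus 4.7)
The plan is to recognize Lemma~\ref{subadd} as the consolidated form of the three preceding subadditivity inequalities, reading the set $(\mathbb{R}^{+}\times\mathbb{R}^{+})\cup\{\mathbf{0}\}$ as the entire non-negative quadrant, and then to proceed by case analysis on how many coordinates of $\sigma$ and $\tau$ vanish. Under the hypotheses $\sigma,\tau\neq \mathbf{0}$ and $\sigma+\tau=\beta\in\mathbb{R}^{+}\times\mathbb{R}^{+}$, the admissible configurations fall into four classes: (a) both $\sigma$ and $\tau$ have two strictly positive components; (b) exactly one of them has the form $(0,s_2)$ while the other lies in $\mathbb{R}^{+}\times\mathbb{R}^{+}$; (c) exactly one has the form $(s_1,0)$ while the other lies in $\mathbb{R}^{+}\times\mathbb{R}^{+}$; (d) both $\sigma$ and $\tau$ are supported on coordinate axes.

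For the first three cases no new work is required. Case (a) is exactly the hypothesis of Lemma~\ref{SUBApos}; case (b), after possibly swapping $\sigma\leftrightarrow\tau$, is Lemma~\ref{SUBApos2}; and case (c) is Lemma~\ref{SUBApos3}. Each of these lemmas already produces the strict inequality $E_\beta<E_\sigma+E_\tau$, so the conclusion is immediate.

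The remaining case (d) is not handled by the earlier lemmas and demands a short direct argument. Since $\beta=\sigma+\tau$ has both components strictly positive, $\sigma$ and $\tau$ must lie on \emph{different} axes; up to relabeling we may take $\sigma=(\sigma_1,0)$ and $\tau=(0,\tau_2)$ with $\sigma_1,\tau_2>0$. Because $F(u,0)=F(0,v)=0$, the definitions of $E_\sigma$ and $E_\tau$ reduce to the scalar problems $E_\sigma=\inf\{e_1(u):\|u\|_{L^2}^2=\sigma_1\}$ and $E_\tau=\inf\{e_2(v):\|v\|_{L^2}^2=\tau_2\}$, whose minimizers $\psi_1,\psi_2$ are supplied by Lemma~\ref{ScalarL}. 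These minimizers may be chosen non-negative and non-trivial (indeed, strictly positive as ground states of the fractional scalar equation). Testing the competitor $(\psi_1,\psi_2)\in\mathcal{S}_\beta$ in the definition of $E_\beta$ yields
\[
E_\beta\leq E(\psi_1,\psi_2) = e_1(\psi_1)+e_2(\psi_2)-\beta\int_{\mathbb{R}^N}|\psi_1|^{r_1}|\psi_2|^{r_2}\,dx = E_\sigma+E_\tau-\beta\int_{\mathbb{R}^N}|\psi_1|^{r_1}|\psi_2|^{r_2}\,dx,
\]
and the last integral is strictly positive since both factors are strictly positive on overlapping sets of positive measure (translating one of the minimizers if needed, which is legitimate by the translation invariance of the scalar problems).

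The only non-routine ingredient is verifying strict positivity of the coupling integral in case (d); this is the main point to be checked, and it rests entirely on the non-triviality and positivity properties of the scalar minimizers furnished by Lemma~\ref{ScalarL}. Everything else is bookkeeping around the three preceding subadditivity lemmas.
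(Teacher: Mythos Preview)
Your proposal is correct and follows essentially the same approach as the paper: the same four-case split, with cases (a)--(c) delegated to Lemmas~\ref{SUBApos}, \ref{SUBApos2}, \ref{SUBApos3}, and case (d) handled by testing the pair of scalar minimizers from Lemma~\ref{ScalarL} and using strict positivity of the coupling integral $\int_{\mathbb{R}^N}F(\psi_1,\psi_2)\,dx$. Your write-up is in fact slightly more careful than the paper's in justifying why that integral is strictly positive; just be mindful that the symbol $\beta$ is overloaded here (it denotes both $\sigma+\tau$ in the lemma statement and the coupling constant in $F$), so you may want to rename one of them when writing this out.
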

\noindent{\bf Proof.}\ \ To prove \eqref{SUBA}, we consider
four separate cases: (i) $\sigma, \tau\in \mathbb{R}^{+}\times \mathbb{R}^{+}$;
(ii) $\tau\in \mathbb{R}^{+}\times \mathbb{R}^{+}$ and $\sigma\in \{0\}\times \mathbb{R}^{+}$;
(iii) $\sigma \in \mathbb{R}^{+}\times \mathbb{R}^{+}$ and $\tau\in \mathbb{R}^{+}\times \{0\}$; and (iv) $\sigma\in \{0\}\times \mathbb{R}^{+}$ and
$\tau\in \mathbb{R}^{+}\times \{0\}.$ All other remaining cases can be reduced to one of cases above by swapping the indices. In view of
lemmas \ref{SUBApos}, \ref{SUBApos2}, and \ref{SUBApos3}, it only remains to prove \eqref{SUBA} in the case when $\sigma\in \{0\}\times \mathbb{R}^{+}$ and $\tau\in \mathbb{R}^{+}\times \{0\}.$ Assuming $\tau_2=\sigma_2$ in Lemma~\ref{ScalarL}, let $\psi_{\tau_1}$
and $\psi_{\sigma_2}$ be minimizers of $e_1(u)$ and $e_2(u)$ over $S_{\tau_1}$ and $S_{\sigma_2},$ respectively.
Then it is obvious
that $\int_{\mathbb{R}^N}F(\psi_{\tau_1},\psi_{\sigma_2})\ dx>0$ and \eqref{SUBA} holds. \hfill{$\Box$}

\begin{lem}\label{revLem}
For any $\sigma\in \mathbb{R}^{+}\times \mathbb{R}^{+},$ let $\{(u_1^n, u_2^n)\}_{n\geq 1}$ be a minimizing
sequence for the problem $(E, \mathcal{S}_\sigma).$ Define $Q_n:[0,\infty)\to [0,\sigma_1+\sigma_2]$ by
\begin{equation}\label{Pndef}
Q_n(t)=\sup_{y\in \mathbb{R}}\int_{y+B_t(0)}\left(|u_1^n(x)|^2+|u_2^n(x)|^2\right)\ dx,\ n\geq 1,\ t>0.
\end{equation}
Then the following assertions hold:

\smallskip

(i) Every sequence of non-decreasing
functions $(Q_n)$ has a subsequence converging pointwise
to a non-decreasing function $Q:[0,\infty)\to [0,\sigma_1+\sigma_2].$

\smallskip

(ii) If $\gamma$ is defined by
$\gamma = \Lim{t \to \infty}Q(t),$
then there exists an ordered pair $\tau=(\tau_1, \tau_2) \in [0,\sigma_1]\times [0,\sigma_2]$ such that $\gamma=\tau_1+\tau_2$ and
\begin{equation}\label{RSadd}
E_\sigma\geq E_\tau +E_{\sigma-\tau}.
\end{equation}
\end{lem}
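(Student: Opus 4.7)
The plan is to prove (i) with Helly's theorem and (ii) with a Lions-style concentration-compactness splitting of the minimizing sequence, adapted to the nonlocal operator $(-\Delta)^\alpha$. Part (i) is immediate: each $Q_n$ is non-decreasing with range in $[0,\sigma_1+\sigma_2]$, so Helly's selection principle supplies a subsequence converging pointwise on $[0,\infty)$ to a non-decreasing function $Q$ valued in the same interval.

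For (ii), set $\gamma=\lim_{t\to\infty}Q(t)$, and note that $Q(t)\le\gamma$ for every $t\ge 0$. Using this together with pointwise convergence $Q_n(t)\to Q(t)$, I would extract by a diagonal procedure radii $r_n\uparrow\infty$ and $R_n\uparrow\infty$ with $R_n/r_n\to\infty$, and translations $y_n\in\mathbb{R}^N$, so that after replacing $u_j^n$ by $u_j^n(\cdot+y_n)$ one has
\[
\int_{B_{r_n}}\bigl(|u_1^n|^2+|u_2^n|^2\bigr)\,dx\longrightarrow\gamma,\qquad \int_{B_{R_n}\setminus B_{r_n}}\bigl(|u_1^n|^2+|u_2^n|^2\bigr)\,dx\longrightarrow 0.
\]
Introduce cutoffs $\chi_n,\eta_n\in C^\infty(\mathbb{R}^N)$ with $0\le\chi_n,\eta_n\le 1$, $\chi_n=1$ on $B_{r_n}$ and $\mathrm{supp}\,\chi_n\subset B_{2r_n}$, $\eta_n=0$ on $B_{R_n/2}$ and $\eta_n=1$ outside $B_{R_n}$, together with $|\nabla\chi_n|\le C/r_n$ and $|\nabla\eta_n|\le C/R_n$. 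Set $v_j^n=\chi_n u_j^n$ and $w_j^n=\eta_n u_j^n$; their supports are separated by $d_n:=R_n/2-2r_n\to\infty$. One further extraction yields $\|v_j^n\|_{L^2(\mathbb{R}^N)}^2\to\tau_j\in[0,\sigma_j]$, $\|w_j^n\|_{L^2(\mathbb{R}^N)}^2\to\sigma_j-\tau_j$, and $\tau_1+\tau_2=\gamma$.

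The heart of the proof is then the energy identity
\[
E(u_1^n,u_2^n)=E(v_1^n,v_2^n)+E(w_1^n,w_2^n)+o(1).
\]
The $L^{p_j}$ and coupling contributions differ only by integrals over the annulus $B_{R_n}\setminus B_{r_n}$, where vanishing $L^2$-mass combined with the uniform $L^{2_\alpha^\star}$ bound of Lemma~\ref{sobInq} and H\"older interpolation forces the discrepancy to zero. The fractional kinetic term is the delicate piece: using the Gagliardo form of Lemma~\ref{equiHs}, the discrepancy breaks into near-diagonal commutator integrals $\iint(\chi_n(x)-\chi_n(y))^2|u_j^n(x)|^2/|x-y|^{N+2\alpha}\,dxdy$, controlled by $Cr_n^{-2\alpha}\|u_j^n\|_{L^2(\mathbb{R}^N)}^2$ (with a parallel $CR_n^{-2\alpha}$ estimate from $\eta_n$), plus long-range cross integrals $\iint v_j^n(x)w_j^n(y)/|x-y|^{N+2\alpha}\,dxdy$ between the two separated supports, bounded by $Cd_n^{-2\alpha}\|u_j^n\|_{L^2(\mathbb{R}^N)}^2$; all three tend to zero by the choice of scales.

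To conclude, I would rescale $\tilde v_j^n:=\sqrt{\tau_j/\|v_j^n\|_{L^2(\mathbb{R}^N)}^2}\,v_j^n$ (well-defined when $\tau_j>0$; the scaling factor tends to $1$) so that $(\tilde v_1^n,\tilde v_2^n)\in\mathcal{S}_\tau$, and analogously $(\tilde w_1^n,\tilde w_2^n)\in\mathcal{S}_{\sigma-\tau}$. Continuity of $E$ under these asymptotically trivial rescalings then gives $\liminf E(v_1^n,v_2^n)\ge E_\tau$ and $\liminf E(w_1^n,w_2^n)\ge E_{\sigma-\tau}$, so passing to the limit in the splitting yields \eqref{RSadd}. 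Boundary cases with some $\tau_j=0$ are handled by the convention $E_{\mathbf{0}}=0$ and by interpolating the vanishing $L^2$-mass against the $L^{2_\alpha^\star}$ bound to conclude that the nonlinear and coupling contributions of the vanishing component are also $o(1)$, so the surviving piece is governed by the scalar problem of Lemma~\ref{ScalarL}. The main obstacle is precisely the nonlocal splitting of the kinetic energy: every commutator and tail estimate must be carried out quantitatively in the scales $r_n$, $R_n$, $d_n$, with no direct analog of the integration-by-parts identity available in the local case.
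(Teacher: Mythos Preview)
Your approach is correct and follows the standard Lions dichotomy framework, but it differs from the paper's proof in two technical respects. First, the paper uses an IMS-type partition $\rho_t^2+\sigma_t^2\equiv 1$ with a single fixed scale $t$ (and then a diagonal argument in $\epsilon=1/m$), whereas you use two separated cutoffs $\chi_n,\eta_n$ at diverging scales $r_n\ll R_n$; your version makes the cross-term estimate between $v^n$ and $w^n$ transparent via the $d_n^{-2\alpha}$ tail bound. Second, and more substantively, the paper controls the nonlocal kinetic commutator by invoking the Kato--Ponce/Kenig--Ponce--Vega commutator estimate (its Lemma~\ref{CEst}) on the Fourier side, while you work directly with the Gagliardo double integral and obtain the $Cr_n^{-2\alpha}$ bound by splitting $|x-y|\lessgtr r_n$. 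Your route is more self-contained and avoids importing the commutator lemma; the paper's route is shorter once that lemma is granted.

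One point deserves tightening: you state an energy \emph{identity} $E(u^n)=E(v^n)+E(w^n)+o(1)$, but your estimates actually only deliver the inequality $E(v^n)+E(w^n)\le E(u^n)+o(1)$. Indeed, with your cutoffs one has $\chi_n^2+\eta_n^2\le 1$ (with equality failing on the gap annulus $B_{R_n/2}\setminus B_{2r_n}$), so the leading kinetic term satisfies $[v_j^n]^2+[w_j^n]^2\le [u_j^n]^2+o(1)$ but not the reverse; the Gagliardo energy carried by $u_j^n$ across the gap need not be $o(1)$ even though the $L^2$ mass there vanishes. This is harmless for the lemma---only the inequality is needed to conclude $E_\tau+E_{\sigma-\tau}\le E_\sigma$---and the paper likewise proves only the one-sided estimate \eqref{e12ineq}. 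Just rephrase the ``identity'' as an inequality and the argument goes through.
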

\noindent{\bf Proof.}\ \ Each function $Q_n$ is non-decreasing on $[0,\infty)$ and by Helly's selection
theorem $Q(t)=\Lim{t\to \infty}Q_n(t)$ (perhaps passing to a subsequence) is a
non-decreasing function on $[0,\infty).$ It is easy to check that $0\leq \gamma \leq \sigma_1+\sigma_2.$
To prove statement (ii), let $\epsilon>0$ be an arbitrary.
We continue to assume that the subsequence associated with $\gamma$ is the whole sequence.
It follows from the definition of $\gamma$ that
there exists $t_\epsilon>0$ and $N_\epsilon\in \mathbb{N}$
such that for every $t\geq t_\epsilon$ and $n\geq N_\epsilon,$ one has $\gamma-\epsilon<Q(t)\leq Q(2t)<\gamma$ and
$
\gamma-\epsilon < Q_{n}(t) \leq Q_{n}(2t) \leq \gamma +\epsilon.
$
Thus, by the definition of the concentration functions $Q_{n},$ for every $n\geq N_\epsilon$ there exists a sequence of points $\{y_n\}\subset \mathbb{R}^N$ such that
\begin{equation}\label{sub113}
\int_{y_n+B_{t}(0)}\rho_{n}(x)\ dx> \gamma-\epsilon\ \ \textrm{and}\
\int_{y_n+B_{2t}(0)}\rho_{n}(x) \ dx < \gamma +\epsilon,
\end{equation}
where $\rho_{n}=|u_1^n|^2+|u_2^n|^2.$ Now, for any
$\eta > 0,$ we set
$\rho_\eta(x)=\rho(x/\eta)$ \ \text{and} \ $\sigma_\eta(x)=\sigma(x/\eta),$ where
$\rho \in C_{0}^{\infty }(B_2(0))$ be such that $\rho \equiv 1$ on $B_1(0)$ and
$\sigma \in C^{\infty}(\mathbb{R}^N)$ be such that $\rho^2 + \sigma^2 \equiv 1$ on $\mathbb R^N$.
Next, define the functions
\[
\begin{aligned}
   &\left(u_1^{1,n}(x), u_1^{2,n}(x)\right)=\rho_{t}(x-y_k)\left(u_{1}^n(x),u_2^n(x)\right),\ x\in \mathbb{R}^N,\\
    & \left(u_2^{1,n}(x), u_{2}^{2,n}(x)\right)=\sigma_{t}(x-y_k)\left(u_1^n(x),u_2^n(x)\right), \ x\in \mathbb{R}^N.
\end{aligned}
\]
Then, one can pass to a subsequence to find the numbers
$\tau_1 \in [0,\sigma_1]$ and $\tau_2 \in
[0,\sigma_2]$ such that $\|f_1^{i,n}\|_{L^2(\mathbb{R}^N)}\to \sqrt{\tau_i}$ for $i=1,2,$ whence it also follows 
immediately that $\|f_2^{i,n}\|_{L^2(\mathbb{R}^N)}\to \sqrt{\sigma_i-\tau_i}$ for $i=1,2.$
Now taking into account of these and making use of the inequalities \eqref{sub113}, it is easy to check that
$|(\tau_{1}+\tau_2)-\gamma| <\epsilon .$
Suppose for now that the following holds:
\begin{equation} \label{e12ineq}
E\left(u_1^{1,n},u_1^{2,n}\right)+E\left(u_2^{1,n},u_{2}^{2,n}\right) \le E\left(u_1^n,u_2^n\right) + C\epsilon,\ \ \forall n.
\end{equation}
\noindent To prove the inequality \eqref{RSadd}, since for any given $\epsilon>0,$ each of the
terms in both sides of \eqref{e12ineq}
is bounded independently of $n,$ thus up to a subsequence, one may assume that
$E\left(u_1^{1,n},u_1^{2,n}\right) \to \Lambda_1$ and $E\left(u_2^{1,n},u_{2}^{2,n}\right) \to \Lambda_2.$
In turn, it follows that
\[
\Lambda_1 +\Lambda_2 \leq E_\sigma+C\epsilon.
\]
Since the number $\epsilon$ can be chosen arbitrarily small and $t$ can be taken arbitrarily large, taking account
into the results obtained in the preceding paragraphs, one sees that for every $m\in \mathbb{N},$
one can find sequences of functions $\left(u_{1,m}^{1,n},u_{1,m}^{2,n}\right)$ and $\left(u_{2,m}^{1,n},u_{2,m}^{2,n}\right)$ in
$H^\alpha(\mathbb{R}^N)\times H^\alpha(\mathbb{R}^N)$ such that $\|u_{1,m}^{i,n}\|_{L^2(\mathbb{R}^N)}\to \sqrt{\tau_i(m)}$, $\|u_{2,m}^{i,n}\|_{L^2(\mathbb{R}^N)}\to \sqrt{\sigma_i-\tau_i(m)},$ and $ E\left(u_{i,m}^{1,n},u_{i,m}^{2,n}\right) = \Lambda_i(m)$ for $i=1,2,$
where $\tau_1(m) \in [0,\sigma_1], \ \tau_2(m) \in [0,\sigma_2],$
\begin{equation}\label{Limit1m}
|\tau_1(m)+\tau_2(m)-\gamma| \leq \epsilon,\ \ \textrm{and}\ \ \Lambda_1(m) +\Lambda_2(m) \leq E_\sigma+\frac{1}{m}.
\end{equation}
Passing to a subsequence, we can further suppose
that $\tau_1(m) \to \tau_1\in [0,\sigma_1], \tau_2(m) \to \tau_2 \in [0,\sigma_2], \Lambda_1(m) \to \Lambda_1,$
and $\Lambda_2(m)\to \Lambda_2.$ Moreover, by redefining the
sequences $\left(u_1^{1,n},u_1^{2,n}\right)$ and $\left(u_2^{1,n},u_2^{2,n}\right)$ to be diagonal subsequences
$\left(u_1^{1,n},u_1^{2,n}\right)=\left(u_{1,n}^{1,n},u_{1,n}^{2,n}\right)$ and $\left(u_2^{1,n},u_2^{2,n}\right)=\left(u_{2,n}^{1,n},u_{2,n}^{2,n}\right),$
one can assume that $\|u_{1}^{i,n}\|_{L^2(\mathbb{R}^N)}\to \sqrt{\tau_i}$, $\|u_{2}^{i,n}\|_{L^2(\mathbb{R}^N)}\to \sqrt{\sigma_i-\tau_i},$ and
$E\left(u_i^{1,n},u_i^{2,n} \right)\to \Lambda_i$ for $i=1,2.$
Now, letting the limit as $m \to \infty$ in the first inequality of \eqref{Limit1m}, one obtains $\gamma=\tau_1+\tau_2.$
The condition \eqref{RSadd} follows from the second inequality in \eqref{Limit1m} provided one can show that
\begin{equation}\label{GeqInq}
\Lambda_1 \geq E_\tau\ \ \textrm{and}\ \ \Lambda_2 \geq E_{\sigma-\tau}.
\end{equation}
To see the first inequality in \eqref{GeqInq}, assume first that both $\tau_1$ and $\tau_2$ are positive and define
\[
\beta_1^n=\frac{(\tau_1)^{1/2}}{\|u_1^{1,n}\|_{L^2(\mathbb{R}^N)}}\ \ \textrm{and}\ \ \beta_2^n=\frac{(\tau_2)^{1/2}}{\|u_1^{2,n}\|_{L^2(\mathbb{R}^N)}}.
\]
Then, it is obvious that $E\left(\beta_1^nu_1^{1,n},\beta_2^nu_1^{2,n}\right)\geq E_\tau.$ Since
scaling factors all tend to $1$ as $n\to \infty,$ we have that $E\left(\beta_1^nu_1^{1,n},\beta_2^nu_1^{2,n}\right)\to \Lambda_1$ and hence, the first inequality in \eqref{GeqInq} follows. Next, suppose that one of $\tau_1$ or $\tau_2$ is zero, say $\tau_1=0.$ Then, it follows
from the Sobolev interpolation inequality that
$
\int_{\mathbb{R}^N}|u_1^{1,n}|^{r_1} |u_1^{2,n}|^{r_2}\ dx \to 0\ \textrm{as}\ \ n\to \infty
$
and hence, we obtain that
\begin{equation*}
\Lambda_1  =\lim_{n\to \infty}E\left(u_1^{1,n},u_1^{2,n}\right)
  = \lim_{n\to \infty}\left( e_2\left(u_1^{2,n} \right)+ \int_{\mathbb{R}^N}|D^\alpha u_1^{1,n}|^2\ dx\right) \geq E_{\tau_2}.
\end{equation*}
This concludes the proof of the first inequality in \eqref{GeqInq}.
The proof of second inequality in \eqref{GeqInq} uses 
the same argument with $\sigma_1-\tau_1$ and $\sigma_2-\tau_2$ enjoying the roles of $\tau_1$ and $\tau_2,$ respectively.

\smallskip

To complete the proof of lemma, it only remains to
prove the condition \eqref{e12ineq}. We will make use of the following commutator estimates result.
\begin{lem}\label{CEst}
If $0<\alpha<1$ and $f,g\in \mathcal{S}(\mathbb{R}^N),$ then
\[
\|\left[D^\alpha,f\right]g\|_{L^2(\mathbb{R}^N)}\leq C \left(\|\nabla f\|_{L^{p_1}(\mathbb{R}^N)}\|D^{\alpha-1}g\|_{L^{q_1}(\mathbb{R}^N)}+\|D^\alpha f\|_{L^{p_2}(\mathbb{R}^N)}\|g\|_{L^{q_2}(\mathbb{R}^N)} \right),
\]
where $\left[X, Y \right]=XY-YX$ is the commutator, $q_1, q_2\in [2,\infty),$ and
\[
\frac{1}{p_1}+\frac{1}{q_1}=\frac{1}{p_2}+\frac{1}{q_2}=\frac{1}{2}.
\]
\end{lem}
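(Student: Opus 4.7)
The result is a fractional Kato--Ponce type commutator estimate, and the natural way to prove it is via a paraproduct/Littlewood--Paley decomposition combined with the Coifman--Meyer multilinear multiplier theorem. The plan is to view $[D^\alpha,f]g$ as a bilinear Fourier multiplier operator with symbol
\[
m(\xi,\eta) = |\xi+\eta|^\alpha - |\xi|^\alpha,
\]
where $\xi$ is the Fourier variable dual to $g$ and $\eta$ is dual to $f$. I would then dyadically decompose both inputs, writing $f = \sum_j P_j f$ and $g = \sum_k P_k g$ with standard Littlewood--Paley projectors $P_j$, and split the bilinear form into the three paraproduct regions according to the relative sizes of $|\eta|$ and $|\xi|$: low-high ($|\eta| \ll |\xi|$), high-low ($|\eta| \gg |\xi|$), and high-high ($|\eta| \sim |\xi|$).

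In the low-high region, which is where the nontrivial cancellation in the commutator structure lives, a Taylor expansion of the symbol yields
\[
m(\xi,\eta) = \alpha\, |\xi|^{\alpha-2}\,\xi\cdot\eta + O\!\left(|\eta|^2 |\xi|^{\alpha-2}\right),
\]
so that, after dyadic localization, $m$ becomes a bilinear Coifman--Meyer symbol that places one full derivative on $f$ and $\alpha-1$ fractional derivatives on $g$. Applying the Coifman--Meyer multilinear multiplier theorem to this piece and then using H\"{o}lder's inequality with the exponent condition $\frac{1}{p_1}+\frac{1}{q_1}=\frac{1}{2}$ produces the first term $\|\nabla f\|_{L^{p_1}}\,\|D^{\alpha-1} g\|_{L^{q_1}}$.

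In the high-low and high-high regions there is no useful cancellation, so the strategy is simply to bound the restriction of $m$ in absolute value by $C|\eta|^\alpha$, recognize the corresponding bilinear operator as distributing all $\alpha$ derivatives onto $f$, and apply H\"{o}lder together with the $L^p$ boundedness of the Littlewood--Paley square function (justified by $q_2 \in [2,\infty)$) to obtain $\|D^\alpha f\|_{L^{p_2}}\,\|g\|_{L^{q_2}}$. Summing the dyadic pieces using square-function estimates and the condition $q_1,q_2\in [2,\infty)$ then assembles the full $L^2$ bound.

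The main obstacle, as is typical for such estimates, is verifying the uniform symbol bounds $|\partial^\gamma m(\xi,\eta)|\le C_\gamma(|\xi|+|\eta|)^{\alpha-|\gamma|}$ required to apply Coifman--Meyer on each dyadic block, and controlling the sum over blocks without logarithmic losses. For $0<\alpha<1$ these bounds follow from the homogeneity of $|\cdot|^\alpha$ and smoothness away from the origin, so the analysis is manageable once one commits to the paraproduct framework. An alternative, more hands-on route would be to start from the singular-integral representation of Lemma~\ref{LaEqi}, write
\[
[D^\alpha,f]g(x) = C(N,\alpha)\,\textrm{P.V.}\int_{\mathbb{R}^N}\frac{(f(x)-f(y))\,g(y)}{|x-y|^{N+\alpha}}\,dy,
\]
and split into near-field and far-field regions, using the mean value theorem and the maximal function of $\nabla f$ in the near field, and a pointwise bound by a Riesz potential of $D^\alpha f$ times $g$ in the far field; this avoids any multiplier theory but requires more bookkeeping to land exactly on the two claimed norms.
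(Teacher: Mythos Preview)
Your proposal is a correct and standard route to this estimate, but it is worth noting that the paper does not actually prove Lemma~\ref{CEst} at all: immediately after the statement the author writes that the lemma ``is proved in \cite{Guo} by combining the commutator estimates established by Kato and Ponce in (Lemma~X1 of \cite{Kato}) with a version of Kenig, Ponce, and Vega's result in (Lemma~2.10 of \cite{Kato2}).'' In other words, the paper treats this as a black-box citation to the literature rather than giving any argument.

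What you have sketched is essentially the machinery that underlies the cited Kato--Ponce and Kenig--Ponce--Vega results themselves: the paraproduct decomposition of the bilinear symbol $|\xi+\eta|^\alpha - |\xi|^\alpha$, the Taylor expansion in the low-high block to extract the $\nabla f \cdot D^{\alpha-1}g$ structure, and Coifman--Meyer on each piece. So your approach is not really different from the paper's---it is a from-scratch derivation of the very estimates the paper is content to quote. The advantage of your write-up is that it is self-contained and makes transparent why the two terms on the right-hand side arise (one from the cancellation region, one from the regions where the commutator structure is irrelevant); the advantage of the paper's approach is brevity, since the estimate is classical and not the point of the article. Your alternative singular-integral route via Lemma~\ref{LaEqi} is also viable and would be closer in spirit to how some authors handle the case $0<\alpha<1$ directly.
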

This lemma is proved in \cite{Guo} by combining the commutator estimates
established by Kato and Ponce in (Lemma~X1 of \cite{Kato}) with a version of Kenig, Ponce, and Vega's result
in (Lemma~2.10 of \cite{Kato2}).

\smallskip

We now show that the condition \eqref{e12ineq} holds.
For ease of notation, let us write the shifted functions $\rho_{t}(x-y_k)$ and $\sigma_{t}(x-y_k)$
simply as $\rho_{t}$ and $\sigma_{t}$ respectively.
By Lemma~\ref{CEst} with $f=\rho_t$ and $g=u_{1}^{1,n}$, we estimate
\[
\begin{aligned}
 \|[D^\alpha,\rho_t]& u_{1}^{1,n}\|_{L^2(\mathbb{R}^N)} =\|D^\alpha(\rho_tu_{1}^{1,n})-\rho_tD^\alpha u_{1}^{1,n}\|_{L^2(\mathbb{R}^N)}\\
& \leq C \left(\|\nabla \rho_t\|_{L^{p_1}(\mathbb{R}^N)}\|D^{\alpha-1}u_{1}^{1,n}\|_{L^{q_1}(\mathbb{R}^N)}
+\|D^\alpha \rho_t\|_{L^{p_2}(\mathbb{R}^N)}\|u_{1}^{1,n}\|_{L^{q_2}(\mathbb{R}^N)} \right).
\end{aligned}
\]
Taking $p_1=\infty, q_1=2; p_2=2+\frac{N}{\alpha},$ and $q_2=2+\frac{4\alpha}{N}$ so that $\frac{1}{p_1}+\frac{1}{q_1}=\frac{1}{p_2}+\frac{1}{q_2}=\frac{1}{2},$ and using the Sobolev inequality, we obtain that
\[
\begin{aligned}
\|[D^\alpha,\rho_t]u_{1}^{1,n}\|_{L^{2}(\mathbb{R}^N)}& \leq C \|\nabla \rho_t\|_{\infty}\|D^{\alpha-1}u_{1}^{1,n}\|_{L^{2}(\mathbb{R}^N)}\\
& +C \frac{1}{t^\mu}\|D^\alpha \rho\|_{L^{p_2}(\mathbb{R}^N)}\vertiii{u_{1}^{1,n}}_{H^\alpha(\mathbb{R}^N )},
\end{aligned}
\]
where $\mu=2\alpha^2/(2\alpha+N).$ By the definition of $\rho_t,$ we
have $\|\nabla \rho_{t}\|_\infty=\|\nabla \rho\|_\infty / t.$ It immediately follows from this
last inequality that
\begin{equation}\label{CEst2}
\int_{\mathbb{R}^N}|D^{\alpha}(\rho_t u_{1}^{1,n})|^2\ dx\leq \int_{\mathbb{R}^N}(\rho_t)^2|D^\alpha u_{1}^{1,n}|^2\ dx +C\varepsilon.
\end{equation}
holds for all sufficiently large $t.$
Using the estimate for $\int_{\mathbb{R}^N}|D^\alpha(\rho_t u_{1}^{1,n})|^2\ dx$ obtained in \eqref{CEst2} and a similar estimate for $\int_{\mathbb{R}^N}|D^\alpha(\rho_t u_{1}^{2,n})|^2\ dx$, we can write
\begin{equation*}
\begin{aligned}
E\left(u_1^{1,n},u_1^{2,n}\right)& \leq \int_{\mathbb{R}^N} \rho _{t}^{2}\left(|D^\alpha u_1^{n}|^2+|D^\alpha u_2^{n}|^2-\gamma_1|u_1^{n}|^{p_1}-\gamma_2|u_2^{n}|^{p_2} - F(u_1^{n}, u_2^{n}) \right)\ dx\\
&\ \ +\int_{\mathbb{R}^N}\left(\gamma_1 U_\rho^{p_1}|u_1^{n}|^{p_1}+\gamma_2 U_\rho^{p_2}|u_2^{n}|^{p_2}+U_\rho^{r_1+r_2}F(u_1^{n}, u_2^{n})\right)\ dx+C\varepsilon,
\end{aligned}
\end{equation*}
where $U_{\rho}^{r}=\rho_t^{2}-|\rho_t|^{r}.$
A similar estimate holds for the quantity $E\left(u_2^{1,n},u_2^{2,n}\right).$
Since $\rho_t$ and $\sigma_t$ satisfy $\rho_{t}^2(x) + \sigma_{t}^2(x) \equiv 1$ for $x\in \mathbb{R}^N,$ we obtain that
\begin{equation}\label{CEst4}
\begin{aligned}
E\left(u_1^{1,n},u_1^{2,n}\right)& +E\left(u_2^{1,n},u_2^{2,n}\right)\leq
E\left(u_1^{n},u_2^{n}\right)+\gamma_1 \int_{\mathbb{R}^N}U^{p_1}|u_1^{n}|^{p_1}\ dx \\
\ &+\gamma_2 \int_{\mathbb{R}^N}U^{p_2}|u_2^{n}|^{p_2}\ dx+\int_{\mathbb{R}^N}U^{r_1+r_2}F(u_1^{n}, u_2^{n})\ dx+C\varepsilon ,
\end{aligned}
\end{equation}
where $U^r$ is given by $U^r=U_{\rho}^{r}+ U_{\sigma}^{r}.$ Now denote $\mathcal{D}=B(y_k,2r)-B(y_k,r).$
Then, using the inequalities we obtained in \eqref{sub113}, it immediately follows that
\[
\begin{aligned}
& \int_{\mathbb{R}^N}U^{p_i}|u_i^{n}|^{p_i}\ dx\leq 4\int_\mathcal{D}|u_i^{n}|^{p_i}\ dx\leq C\epsilon,\ i=1,2,\\
& \int_{\mathbb{R}^N}U^{r_1+r_2}F(u_1^{n}, u_2^{n})\ dx\leq 4\beta \|u_1^{n}\|_{\infty}^{r_1}\int_\mathcal{D} |u_2^{n}|^{r_2}\ dx\leq C\epsilon,
\end{aligned}
\]
where the letter $C$ represents various positive constants independent of $t$ and $n.$ Taking into account all these inequalities,
\eqref{e12ineq} follows from \eqref{CEst4}. \hfill{$\Box$}

\section{Proof of existence result}\label{existence}

Armed with all preliminaries lemmas, we are now able to prove the existence theorem.
To begin, consider any energy-minimizing sequence $\{(u_1^n, u_2^n)\}_{n\geq 1}$ for $E_\sigma.$
Define $Q_n$ and $\gamma$ as in Lemma~\ref{revLem}.
Denoting the subsequence associated to $\gamma$ again by $\{(u_1^n, u_2^n)\}_{n\geq 1},$ suppose for now that
$\gamma=\sigma_1+\sigma_2$ (called the case of compactness).
Then for every $k\in \mathbb{N}$ there exists $t_k\in \mathbb{R}$ and points $y_n\in \mathbb{R}^N$ such that for all sufficiently large $n,$
\begin{equation}\label{ExiEs1}
\|u_1^n\|_{L^2(y_n+B_{t_k}(0))}^2+\|u_2^n\|_{L^2(y_n+B_{t_k}(0))}^2  > (\sigma_1+\sigma_2)-\frac{1}{k}.
\end{equation}
For ease of notation, let us denote the shifted sequences $w_1^n(x)=u_1^n(x+y_n)$ and $w_2^n(x)=u_2^n(x+y_n)$ for $x\in \mathbb{R}^N.$
Then using \eqref{ExiEs1}, for every $k\in \mathbb{N},$ we have that
\begin{equation}\label{ExiEs2}
\|w_1^n\|_{L^2(B_{t_k}(0) )}^2+\|w_2^n\|_{L^2(B_{t_k}(0) )}^2  > (\sigma_1+\sigma_2)-\frac{1}{k}.
\end{equation}
Since the translated sequence $\{(w_1^n, w_2^n)\}$ is bounded uniformly in the space $H^\alpha(\mathbb{R}^N)\times H^\alpha(\mathbb{R}^N),$ so
by taking a subsequence if necessary, one may assume that $(w_1^n, w_2^n)\rightharpoonup (u_1,u_2)$ in $H^\alpha(\mathbb{R}^N)\times H^\alpha(\mathbb{R}^N).$ 
Invoking the Fatou's lemma as $n\to \infty$, we have that $\|u_1\|_{L^2(\mathbb{R}^N)}^2+\|u_2\|_{L^2(\mathbb{R}^N)}^2\leq \sigma_1+\sigma_2.$
For each $k\in \mathbb{N},$ the inclusion of $H^\alpha(B_{t_k}(0))$ into $L^2(B_{t_k}(0))$ is compact, so up to a subsequence,
we may assume that $(w_1^n, w_2^n)\to (u_1,u_2)$ strongly in $L^2(B_{t_k}(0))\times L^2(B_{t_k}(0)).$ Moreover, using a
standard diagonalization technique, one may assume that a single subsequence of 
the translated sequence $\{(w_1^n, w_2^n)\}_{n\geq 1}$ has been
chosen which enjoys this property for every $k.$ Now,
by passing the limit as $n\to \infty$ on both sides of \eqref{ExiEs2}, we obtain that
\[
\begin{aligned}
\|u_1\|_{L^2(\mathbb{R}^N)}^2+\|u_2\|_{L^2(\mathbb{R}^N)}^2  &  \geq \|u_1\|_{L^2( B_{t_k}(0)) }^2+\|u_2\|_{L^2( B_{t_k}(0)) }^2  \\
 & =\lim_{n\to \infty}\left(\|w_1^n\|_{L^2( B_{t_k}(0)) }^2+\|w_2^n\|_{L^2( B_{t_k}(0)) }^2 \right)
  \geq (\sigma_1+\sigma_2)-\frac{1}{k}.
\end{aligned}
\]
Since $\|u_1\|_{L^2(\mathbb{R}^N)}^2+\|u_2\|_{L^2(\mathbb{R}^N)}^2\leq \sigma_1+\sigma_2$ and $k\in \mathbb{N}$ was arbitrary, 
this last inequality in turn implies that
$\|u_1\|_{L^2(\mathbb{R}^N)}^2+\|u_2\|_{L^2(\mathbb{R}^N)}^2= \sigma_1+\sigma_2.$ Thus $(w_1^n, w_2^n)\to (u_1,u_2)$ strongly
in $L^2(\mathbb{R}^N)\times L^2(\mathbb{R}^N).$
Now, using the interpolation inequality for $L^p$-norms and the Sobolev inequality, we have that
\begin{equation}\label{p1con}
\begin{aligned}
\|w_1^n-u_1\|_{L^{p_1}(\mathbb{R}^N)} & \leq \|w_1^n-u_1\|_{L^2(\mathbb{R}^N)}^{\lambda}\|w_1^n-u_1\|_{L^{2^\star_{\alpha}}(\mathbb{R}^N) }^{1-\lambda}\\
& \leq C \|w_1^n-u_1\|_{L^2(\mathbb{R}^N)}^{\lambda}~\vertiii{w_1^n-u_1}_{H^\alpha(\mathbb{R}^N) }^{1-\lambda} \leq C \|w_1^n-u_1\|_{L^2(\mathbb{R}^N)}^{\lambda},
\end{aligned}
\end{equation}
where $\lambda$ satisfies $\frac{1}{p_1}=\frac{\lambda}{2}+\frac{1-\lambda}{2^\star_{\alpha}}.$ Making use of the fact that
$w_1^n\to u_1$ strongly in $L^2(\mathbb{R}^N)$ and the estimate \eqref{p1con}, we obtain
that $\|w_1^n\|_{L^{p_1}(\mathbb{R}^N)}\to \|u_1\|_{L^{p_1}(\mathbb{R}^N)}$ as $n\to \infty.$
Similarly, we have that $\|w_2^n\|_{L^{p_2}(\mathbb{R}^N)}\to \|u_2\|_{L^{p_2}(\mathbb{R}^N)}$ as $n\to \infty.$
We also have
\begin{equation}\label{FLimit}
\int_{\mathbb{R}^N}|w_1^n|^{r_1}|w_2^n|^{r_2}\ dx \to \int_{\mathbb{R}^N}|u_1|^{r_1}|u_2|^{r_2}\ dx
\end{equation}
as $n\to \infty.$ By Lemma~\ref{BounMin}, since the translated sequence $\{(w_1^n, w_2^n)\}_{n\geq 1}$ is bounded
in $H^\alpha(\mathbb{R}^N)\times H^\alpha(\mathbb{R}^N),$
\eqref{FLimit} can be proved by writing
\[
\begin{aligned}
\int_{\mathbb{R}^N}\left(|w_1^n|^{r_1}|w_2^n|^{r_2}-|u_1|^{r_1}|u_2|^{r_2} \right)\ dx & \leq \int_{\mathbb{R}^N}|w_1^n|^{r_1}\left(|w_2^n|^{r_2}-|u_2|^{r_2}\right)\ dx \\
& +\int_{\mathbb{R}^N}|u_2|^{r_2}\left(|w_1^n|^{r_1}-|u_1|^{r_1}\right)\ dx.
\end{aligned}
\]
and estimating separately the limiting behavior of the integrals on the right-hand side as $n\to \infty.$
Now, invoking the Fatou's lemma once again, we obtain that
\begin{equation}\label{Lcon}
E_\sigma = \lim_{n\to \infty}E\left(w_1^n, w_2^n \right)\geq E(u_1,u_2),
\end{equation}
whence $E(u_1,u_2)=E_\sigma.$ Thus $(u_1,u_2)$ is a minimizer for $E_\sigma.$
Finally, since equality in \eqref{Lcon} in fact implies that
\[
\lim_{n\to \infty}\left( \|D^\alpha w_1^n\|_{L^2(\mathbb{R}^N)}^{2}+\|D^\alpha w_2^n\|_{L^2(\mathbb{R}^N)}^{2} \right) = \|D^\alpha u_1\|_{L^2(\mathbb{R}^N)}^{2}+\|D^\alpha u_2\|_{L^2(\mathbb{R}^N)}^{2},
\]
and therefore, $(w_1^n, w_2^n)\to (u_1,u_2)$ strongly in $H^\alpha(\mathbb{R}^N)\times H^\alpha(\mathbb{R}^N).$
Thus, in order to complete the proof of existence theorem, one needs to show that $\gamma=\sigma_1+\sigma_2$ is the only possibility. To see this,
we claim that the following holds for any energy-minimizing sequence:

\smallskip

(i) $\gamma >0$

\smallskip

(ii) $\gamma \not\in (0, \sigma_1+\sigma_2).$

\smallskip

The possibility $\gamma=0$ is called the case of vanishing and
the possibility $\gamma\in (0, \sigma_1+\sigma_2)$ is called the dichotomy. To prove $\gamma>0,$ we argue by contradiction.
If $\gamma=0$ and $\{(u_1^n, u_2^n)\}_{n\geq 1}$ is the subsequence associated with $\gamma,$ then for all $R>0,$ one has
\[
\lim_{n\to \infty}\left( \sup_{y\in \mathbb{R}^N}\int_{y+B_R(0) }\left(|u_1^n(x)|^2+ |u_2^n(x)|^2\right)\ dx\right) =0.
\]
By the part (i) of Lemma~\ref{revLem}, the sequences $\{|u_1^n|\}$ and $\{|u_2^n|\}$ are both bounded in the space $H^\alpha(\mathbb{R}^N).$
Since $2<p_1, p_2<2+\frac{4\alpha}{N}<\frac{2N}{N-2\alpha},$ Lemma~\ref{Lvanish} proves
that $\|u_1^n\|_{L^{p_1}(\mathbb{R}^N )}^{p_1}\to 0$ and $\|u_2^n\|_{L^{p_2}(\mathbb{R}^N)}^{p_2}\to 0$ as $n\to \infty.$
Since $2<2r_1, 2r_2<\frac{2N}{N-2\alpha},$ using the H\"{o}lder inequality and another use
of Lemma~\ref{Lvanish}, it also follows that
\[
\int_{\mathbb{R}^N} |f_1^n|^{r_1}|f_2^n|^{r_2}\ dx  \leq \left( \int_{\mathbb{R}^N} |f_1^n|^{2r_1}\ dx\right)^{1/2} \left( \int_{\mathbb{R}^N} |f_2^n|^{2r_2}\ dx\right)^{1/2}\to 0
\]
as $n\to \infty.$ Taking account into these convergence properties, we obtain that the infimum of the energy satisfies
\[
E_\sigma=\lim_{n\to \infty}E\left(f_1^n,f_2^n \right)\geq \liminf_{n\to \infty}\int_{\mathbb{R}^N}\frac{1}{2}\left(|D^\alpha f_1^n|^2+ |D^\alpha f_2^n|^2 \right)\ dx\geq 0,
\]
which contradicts the Lemma~\ref{NegInfE} and hence, $\gamma>0.$

\smallskip

Next we show that dichotomy is also not an option for an energy-minimizing sequence.
Suppose that $\gamma\in(0,\sigma_1+\sigma_2)$ holds. Let $\tau=(\tau_1,\tau_2)$ be the
same ordered pair that was found in the part (iii) of Lemma~\ref{revLem} and define $\rho$ by $\rho=\sigma-\tau.$
Then, we have that $\tau+\rho=\beta \in \mathbb{R}^{+}\times \mathbb{R}^{+}$ and $\tau, \rho \neq \{\mathbf{0}\}.$
Consequently, Lemma~\ref{subadd} obtains that $E_\beta<E_\tau+E_\rho.$ On the other hand, the part (iii) of
Lemma~\ref{revLem} implies that $E_\sigma\geq E_\tau+E_{\sigma-\tau},$ which is same as
$E_\beta \geq E_\tau + E_\rho,$ a contradiction. Therefore, $\gamma\in(0,\sigma_1+\sigma_2)$ can not occur here.
This completes the proof of Theorem~\ref{existence}.

\bigskip

The proof of part (iv) of the existence theorem follows a similar argument as in Theorem~1.2(v) of \cite{[AB11]} and we do not repeat here.

\section{Proof of stability result}\label{stability}

The stability result can be proved by a
classical argument, which we repeat for completeness. If the claim were not true, then there would exist
a number $\varepsilon>0,$ a sequence $\{(\Psi_{1}^n(0), \Psi_{2}^n(0))\}_{n\geq 1}\subset H^\alpha(\mathbb{R}^N)\times H^\alpha(\mathbb{R}^N),$ and $t_n\geq 0$ such that
for all $n,$
\[
\inf_{(u_1,u_2)\in V(\tau)}\vertiii{(\Psi_{1}^n(0), \Psi_{2}^n(0))-(u_1,u_2)}_{\alpha}<\frac{1}{n},
\]
and
\[
\inf_{(u_1,u_2)\in V(\tau)} \vertiii{(\Psi_{1}^n(t_n), \Psi_{2}^n(t_n))-(u_1,u_2)}_{\alpha}\geq \varepsilon,
\]
where $(\Psi_{1}^n(t_n), \Psi_{2}^n(t_n))$ is the solution of \eqref{Fnls}
emanating from $(\Psi_{1}^n(0), \Psi_{2}^n(0)).$
Since $(u_1,u_2)\in V(\tau),$ one has that
\[
E\left( \Psi_{1}^n(0), \Psi_{2}^n(0)\right) \to E_\tau=E(u_1,u_2),\ \| \Psi_{j}^n(0)\|_{L^2(\mathbb{R}^N)}\to \sqrt{\tau_j},\ j=1,2,
\]
as $n\to \infty.$ Since the energy $E(f_1,f_2)$ and $\int_{\mathbb{R}^N}|f_j|^2\ dx$ both are conserved functionals, therefore
the sequence of functions
\[
\{(\Psi_{1}^n(t_n), \Psi_{2}^n(t_n))\}_{n\geq 1}\subset H^\alpha(\mathbb{R}^N)\times H^\alpha(\mathbb{R}^N)
\]
is a minimizing sequence for the problem $(E,\mathcal{S}_\tau)$. By the existence theorem, this sequence
must be relatively compact in $H^\alpha(\mathbb{R}^N)\times H^\alpha(\mathbb{R}^N)$ up to a translation.
Hence, there exists a subsequence $\{(\Psi_{1}^{n_k}(t_{n_k}), \Psi_{2}^{n_k}(t_{n_k}))\}_{k\geq 1},$
a sequence of points $x(t_{n_k})\in \mathbb{R}^N,$ and an element $(u_1,u_2)\in V(\tau)$ such that
\[
\inf_{(u_1,u_2)\in V(\tau)} \vertiii{(\Psi_{1}^{n_k}(t_{n_k}, \cdot+x(t_{n_k})), \Psi_{2}^{n_k}(t_{n_k}, \cdot+x(t_{n_k}))-(u_1,u_2)}_{\alpha}\to 0
\]
as $k\to \infty,$ which is a contradiction, and hence the set $V(\tau)$ is stable. \hfill{$\Box $}


\begin{thebibliography}{99}
\bibitem{AAkdv} J. Albert and J. Angulo, {\em Existence and stability
of ground-state solutions of a Schr\"{o}dinger-KdV system}, Proc.
Royal Soc. of Edinburgh, \textbf{133A} (2003) 987--1029.

\bibitem{ABnls} J.~Albert and S.~Bhattarai, {\em Solitary waves for coupled nonlinear Schr\"{o}dinger and Schr\"{o}dinger-KdV
systems: existence and stability theory }, preprint

\bibitem{[AB11]} J.~Albert and S.~Bhattarai, {\em Existence and stability of a two-parameter
family of solitary waves for an NLS-KdV system}, Adv. Differential Eqs.,  {\bf 18} (2013) 1129 -- 1164.

\bibitem{SBdcds} S.~Bhattarai, \emph{Stability of normalized solitary waves for three coupled nonlinear Schr\"{o}dinger equations},
       Discr. Cont. Dyn. Syst. A \textbf{36} (2016), 1789--1811.

\bibitem{CLi} T.~Cazenave and P.-L. Lions, {\em Orbital stability of standing waves for some nonlinear Schr\"{o}dinger
               equations}, Comm. Math. Phys. {\bf 85} (1982) 549--561.

\bibitem{JeanJ} T.~Gou and L.~Jeanjean, {\em Existence and orbital stability of
standing waves for nonlinear Schr\"{o}dinger systems}, arXiv:1512.08952

\bibitem{Guo} B. Guo and D. Huang, \emph{Existence and stability of standing waves for nonlinear
fractional Schr\"{o}dinger equations}, Jour. Mathematical Phys., 53, 083702 (2012).

\bibitem{Las1} N. Laskin, \emph{Fractional quantum mechanics and L\'{e}vy integral}, Phys. Lett. A 268, 298-305 (2000).
\bibitem{Las2} N. Laskin, \emph{Fractional quantum mechanics}, Phys. Rev. E 62, 3135 (2000).
\bibitem{Las3} N. Laskin, \emph{Fractional Schr\"{o}dinger equations}, Phys. Rev. E 66, 056108 (2002).

\bibitem{Lio} P.~L.~Lions, \emph{The concentration-compactness principle in
the calculus of variations. The locally compact case}, Part 1, Ann. Inst. H. Poincare
Anal. Non-lin\'{e}aire 1 (1984) 109-145.

\bibitem{Lio2} P.~L.~Lions, \emph{The concentration-compactness principle in
the calculus of variations. The locally compact case}, Part 2, Ann. Inst. H. Poincare
Anal. Non-lin\'{e}aire 1 (1984) 223-283.

\bibitem{NWang} C. Liu, N.~V.~Nguyen, and Z-Q.~Wang,
     \emph{Existence and stability of solitary waves of an $m$-coupled nonlinear Schr\"{o}dinger system}, preprint.

\bibitem{Kato} T. Kato and G. Ponce, \emph{Commutator estimates and the Euler and Navier-Stokes equations},
Comm. Pure and Applied Math, 41 (1988), 891-907.
\bibitem{Kato2} C. Kenig, G. Ponce, and L. Vega, \emph{Well-posedness of the initial value problem for the Korteweg-de-Vries equation},
J. Am. Math. Soc. 4 (1991), 323-347.

\bibitem{Kir} K. Kirkpatric, E. Lenzmann, G. Staffilani, \emph{On the continuum limit for discrete NLS with long-range
lattice interactions}, Comm. Math. Phys., 317 (2013), 563-591.

\bibitem{Met2} R. Metzler and J. Klafter, \emph{The random walk's guide to anomalous
diffusion: a fractional dynamics approach}, Phys. Rep. 339, 1-77 (2000).
\bibitem{Met} R. Metzler and J. Klafter, \emph{The restaurant at the end of
the random walk: recent developments in the description of anomalous transport by fractional dynamics}, J. Phys. A: Math and General, 37, 161-208 (2004).
\bibitem{NPV} E. Di Nezza, G. Palatucci, and E. Valdinoci, \emph{Hitchhiker's guide to the fractional Sobolev spaces},
Bull. Sci. Math. 136 (2012), no. 5, 521-573.


\bigskip

\textit{E-mail address:} \texttt{sntbhattarai@gmail.com, bhattarais@trocaire.edu}


\end{thebibliography}
\end{document}